\theoremstyle{plain}
\newtheorem{theorem}{Theorem}[section]
\newtheorem{corollary}[theorem]{Corollary}
\newtheorem{lemma}[theorem]{Lemma}
\theoremstyle{remark}
\newtheorem{remark}[theorem]{Remark}
\newtheorem{examples}[theorem]{Example}
\newcommand{\nocontentsline}[3]{}
\let\origcontentsline\addcontentsline
\newcommand\stoptoc{\let\addcontentsline\nocontentsline}
\newcommand\resumetoc{\let\addcontentsline\origcontentsline}
\DeclareMathOperator{\supp}{supp}
\DeclareMathOperator{\diam}{diam}
\DeclareMathOperator{\cpct}{Cap}
\DeclareMathOperator{\dcpct}{\dot{C}ap}
\newcommand{\R}{\mathbb{R}}
\newcommand{\cH}{\mathcal{H}}
\begin{document}

\title{Kernels of trace operators via fine continuity}
\author{M. Hinz$^1$, S.~N. Chandler-Wilde$^2$, D.~P. Hewett$^3$ }
\address{$^1$ Department of Mathematics,
Bielefeld University, 33501 Bielefeld, Germany}
\email{mhinz@math.uni-bielefeld.de}
\address{$^2$ 
Department of Mathematics and Statistics,
University of Reading,
Whiteknights,
PO Box 220, Reading, RG6 6AX, United Kingdom}
\email{s.n.chandler-wilde@reading.ac.uk}
\address{$^3$ 
Department of Mathematics,
University College London,
Gower Street,
London WC1E 6BT, United Kingdom}
\email{d.hewett@ucl.ac.uk}
\thanks{$^1$ Partially supported by DFG CRC 1283, \enquote{Taming uncertainty and profiting from randomness and low regularity in analysis, stochastics and their applications}.}
\thanks{$^2$ Support from the UK Spectral Theory Network, via the Isaac Newton Institute, Cambridge and EPSRC grant EP/V521929/1 gratefully acknowledged.}

\date{\today}
\begin{abstract} 
Given a closed subset $\Gamma$ of $\mathbb{R}^n$ 
that is the support of a measure $\mu$, we study the kernels  of  trace operators from fractional Sobolev spaces $H_p^\alpha(\mathbb{R}^n)$ into the space of $\mu$-equivalence
classes of functions on $\Gamma$. We characterise these kernels as the closure of $C_c^\infty(\mathbb{R}^n\setminus \Gamma)$
in $H_p^\alpha(\mathbb{R}^n)$, provided quasi continuous representatives of elements of $H_p^\alpha(\mathbb{R}^n)$ have the following key property: they
vanish quasi everywhere on $\Gamma$ if and only if they vanish $\mu$-almost 
everywhere on $\Gamma$. We establish that this key property holds if the measures satisfy localized upper density conditions. Such measures need not be doubling, in particular the set $\Gamma$ may be a finite union of closed sets having different 
Hausdorff dimensions. We provide corresponding results for spaces 
$H_p^\alpha(\Omega)$ on domains $\Omega\subset \mathbb{R}^n$ satisfying a weakened version of the measure density condition. We observe that the above key property is essential for the convergence of Galerkin integral equation methods, based on integration with respect to the measure $\mu$, for certain BVPs in the complement of $\Gamma$.

\tableofcontents
\end{abstract}

\keywords{Trace operators, Radon measures, Upper densities.}
\subjclass[2020]{28A12, 28A75, 28A80, 31B15, 31C40, 46E35, 46N40}

\maketitle

\section{Introduction}
\label{S:intro}

The study of trace operators, and the classification of their images and kernels, is a central topic in the theory of function spaces. It also plays a crucial role in the mathematical analysis of 
PDEs and integral equations. We focus here on the case of fractional Sobolev spaces (Bessel potential spaces) $H^\alpha_p(\R^n)$, with $1<p<\infty$ and $\alpha>0$, and trace spaces on closed subsets $\Gamma\subset\R^n$.
Given $\Gamma$,
one seeks to define a bounded linear operator ${\mathrm T}:H^\alpha_p(\R^n) \to B(\Gamma)$ mapping $H^\alpha_p(\R^n)$ surjectively onto a suitable Banach space $B(\Gamma)$ of functions defined on $\Gamma$, with the property that if $u\in H^\alpha_p(\R^n)$ is sufficiently smooth, then $\mathrm{T}u$ corresponds to pointwise restriction, $\mathrm{T} u=u|_\Gamma$, at least outside some negligible set. A fundamental task 
is then to characterize the kernel $\ker \mathrm{T}$ of the operator $\mathrm{T}$ as a subspace of $H^\alpha_p(\R^n)$, noting that $B(\Gamma)$ is isomorphic to the quotient of $H^\alpha_p(\R^n)$ by this kernel. 
If $\Gamma$ is a smooth submanifold of $\mathbb{R}^n$, the boundary of a  Lipschitz domain, or more generally a $d$-set (i.e., the support of an Ahlfors $d$-regular measure) \cite{JW84, Triebel97} for some $0<d<n$,
these questions are 
well understood. 
However, for less regular $\Gamma$ (e.g., when the local fractal dimension varies across $\Gamma$), while definitions of  ${\mathrm T}:H^\alpha_p(\R^n) \to B(\Gamma)$ and characterizations of $B(\Gamma)$ have been established for quite general $\Gamma$, there remain fundamental open questions
concerning the characterization of $\ker \mathrm{T}$, which we address in this paper.  
 
One very general approach to defining trace operators is to take a purely potential-theoretic point of view. It is well known that each element $u\in H^\alpha_p(\mathbb{R}^n)$ has an $(\alpha,p)$-quasi continuous representative $\widetilde{u}$, uniquely determined $(\alpha,p)$-q.e. Take $B(\Gamma)$ to be the set $\widetilde{B}(\Gamma)$ of equivalence classes with respect to $(\alpha,p)$-q.e.\ coincidence of restrictions $\widetilde{u}|_\Gamma$ 
for all such $\widetilde{u}$. We can then define the trace $\mathrm{Tr_{\Gamma,0}}u$ of $u\in H^\alpha_p(\mathbb{R}^n)$ to be the equivalence class of $\widetilde{u}|_\Gamma$. This gives a surjective linear map
\begin{equation}\label{E:traceopqe}
\mathrm{Tr_{\Gamma,0}}:H^\alpha_p(\mathbb{R}^n)\to \widetilde{B}(\Gamma)
\end{equation}
whose kernel $\ker \mathrm{Tr_{\Gamma,0}}$ is the set of all $u\in H^\alpha_p(\R^n)$ for which $\widetilde{u}=0$ $(\alpha,p)$-q.e.\ on $\Gamma$. Standard arguments, \cite[Proposition 2.3.8]{AH96}, show that $\ker \mathrm{Tr_{\Gamma,0}}$ is closed in $H^\alpha_p(\mathbb{R}^n)$. Therefore $\widetilde{B}(\Gamma)$, endowed with the quotient norm, is a Banach space. In this situation $\mathrm{Tr_{\Gamma,0}}$ is a bounded linear operator with norm one.

A well-known theorem of Netrusov \cite{Netrusov93} (see also \cite[Corollary 10.1.2]{AH96})
implies that if 
\begin{equation}\label{E:Netrusov0}
0<\alpha<\alpha_p(\Gamma)+1\quad \text{or}\quad\text{ $\alpha=\alpha_p(\Gamma)+1$ and the $(\alpha_p(\Gamma),p)$-capacity of $\Gamma$ is zero},
\end{equation}
then 
\begin{align}
\label{E:kerTr}
\ker \mathrm{Tr_{\Gamma,0}}=\widetilde{H}^\alpha_p(\mathbb{R}^n\setminus \Gamma),
\end{align}  
where $\widetilde{H}^\alpha_p(\mathbb{R}^n\setminus \Gamma)$ denotes the closure in $H^\alpha_p(\R^n)$ of the set $C^\infty_c(\mathbb{R}^n\setminus\Gamma)$ of all smooth functions compactly supported in $\mathbb{R}^n\setminus \Gamma$ and
\begin{equation}\label{E:capdim}
\alpha_p(\Gamma):=\frac{1}{p}(n-\dim_H\Gamma),
\end{equation}
$\dim_H\Gamma$ being the Hausdorff dimension \cite{Falconer97, Mattila, Zaehle} of $\Gamma$. (This kind of result is sometimes described as \enquote{spectral synthesis}, see for instance \cite{Beurling49, Hedberg81, HedbergNetrusov07} and \cite[Section 9.13]{AH96}.) The Netrusov theorem gives a similar result for larger $\alpha$ if one considers higher-order trace operators involving also traces of partial derivatives. Earlier results for special cases were shown by Sobolev \cite{Sobolev63}, Deny \cite{Deny50}, Havin \cite{Havin68}, Bagby \cite{Bagby72} and Hedberg \cite{Hedberg72}. For details see Section 3 below, or \cite[Sections 9.1 and 10.1]{AH96}. 

On the other hand, in applications one often works with a closed set $\Gamma\subset\mathbb{R}^n$ that is the support $\Gamma=\supp \mu$ of some Radon measure $\mu$, which models a specific distribution of mass across $\Gamma$. A classical example is where $\Gamma=\partial\Omega$ is the boundary of a Lipschitz domain $\Omega$, with $\mu$ the surface measure on $\Gamma$. In this context it is desirable that the image space $B(\Gamma)$ be a subspace of $L^p(\Gamma,\mu)$. This is possible provided $\mu$ and hence, implicitly, $\Gamma$ satisfies appropriate conditions, see for instance \cite[Theorems 7.2.1 and 7.2.2]{AH96}. In particular, $\mu$ should be absolutely continuous with respect to the $(\alpha,p)$-capacity, i.e., $\mu(E)=0$ for every Borel set $E\subset \mathbb{R}^n$ of zero $(\alpha,p)$-capacity. In this absolutely continuous case, we can consider the space $[\widetilde{B}(\Gamma)]_\mu$ of equivalence classes with respect to $\mu$-a.e.\ equality of restrictions $\widetilde{u}|_\Gamma$, 
and define the trace $[\mathrm{Tr_{\Gamma,0}}]_\mu u$ of $u\in H^\alpha_p(\mathbb{R}^n)$ to be the equivalence class of $\widetilde{u}|_\Gamma$ in this sense. Then
\begin{equation}\label{E:traceopmu}
[\mathrm{Tr_{\Gamma,0}}]_\mu:H^\alpha_p(\mathbb{R}^n)\to [\widetilde{B}(\Gamma)]_\mu 
\end{equation}
is a surjective linear map whose kernel $\ker [\mathrm{Tr_{\Gamma,0}}]_\mu$ is the set of all $u\in H^\alpha_p(\R^n)$ for which $\widetilde{u}=0$ $\mu$-a.e.\ on $\Gamma$. Moreover, $\ker [\mathrm{Tr_{\Gamma,0}}]_\mu$ is closed in $H^\alpha_p(\mathbb{R}^n)$, so that $[\widetilde{B}(\Gamma)]_\mu$, endowed with the quotient norm, is a Banach space. With respect to this quotient norm, the linear operator $[\mathrm{Tr_{\Gamma,0}}]_\mu$ is bounded with operator norm one.


The main 
goal of 
this paper is to provide a characterisation of the kernel $\ker [\mathrm{Tr_{\Gamma,0}}]_\mu$ of $[\mathrm{Tr_{\Gamma,0}}]_\mu$. 
Specifically, our aim is to determine general sufficient conditions
under which
\begin{align}
\label{E:kerTrmu}
\ker [\mathrm{Tr_{\Gamma,0}}]_\mu=
 \widetilde{H}^\alpha_p(\mathbb{R}^n\setminus \Gamma).
\end{align}
 We also provide similar results relating to higher-order trace operators involving traces of partial derivatives.

It is evident that if \eqref{E:kerTr} holds then 
\eqref{E:kerTrmu} is equivalent to the requirement that
\begin{equation}\label{E:wish}
\text{$\widetilde{u}=0$ $\mu$-a.e.\ on $\Gamma$}\quad\text{if and only if}\quad\text{$\widetilde{u}=0$ $(\alpha,p)$-q.e.\ on $\Gamma$,} \qquad u\in H^\alpha_p(\R^n).
\end{equation}
Thus 
our focus in this paper will be on determining conditions under which \eqref{E:wish} holds. 
We emphasize that 
while one implication in \eqref{E:wish} is an immediate consequence of the assumption that $\mu$ is absolutely continuous with respect to the $(\alpha,p)$-capacity, the other implication, namely that 
$\widetilde{u}=0$ $\mu$-a.e.\ on $\Gamma$ 
implies 
$\widetilde{u}=0$ $(\alpha,p)$-q.e.\ on $\Gamma$,  
is not satisfied in general. A counterexample where \eqref{E:wish} 
fails to hold was given in \cite[Examples 5.1.2, p. 240]{FOT2011}; we comment on it in Remark \ref{R:counter} below. 
%
%
%
%

The properties \eqref{E:kerTrmu} and \eqref{E:wish} are not only of theoretical interest. A significant motivation for the current study is the fact that if \eqref{E:kerTr} holds then \eqref{E:kerTrmu} and \eqref{E:wish} are \emph{both necessary and sufficient
for the convergence of Galerkin schemes} for the numerical solution of certain operator equations, as we will see in Section \ref{S:Galerkin} {(see also \cite[Section 4]{Ch-WCHR-PS2025})}. Such equations arise, for instance, in the study of integral equation formulations of PDEs in non-smooth domains. One particular context in which our results apply is the scattering of acoustic and electromagnetic waves by fractal obstacles, as studied for example in \cite{CCh-WCGHM25, CCh-WGHM24, Ch-WHM2017, Ch-WHMB2021,Ch-WCHR-PS2025}; for specific applications of \eqref{E:wish} in this context we mention \cite[Theorem 3.9]{CCh-WCGHM25}, \cite[Theorem 2.7]{CCh-WGHM24} and \cite[Proposition 3.6]{Ch-WCHR-PS2025}.

The most general positive results currently available for 
{\eqref{E:kerTrmu} and \eqref{E:wish}}  
appear to be those in 
\cite[Proposition 6.7]{CHM21}. They provide {\eqref{E:kerTrmu} 
(and, as a consequence, \eqref{E:wish})}  
for 
$\alpha_p(\Gamma)<\alpha<\alpha_p(\Gamma) + 1$ 
under the assumption that $\Gamma$ is a $d$-set \cite{JW84, Triebel97} for some $0<d<n$. In this case $\dim_H \Gamma=d$, $\alpha_p(\Gamma)=\frac{n-d}{p}$ and the $(\alpha_p(\Gamma),p)$-capacity of $\Gamma$ is zero. Such sets have local Hausdorff dimension $d$, uniformly on $\Gamma$. 
{A classical example is} 
where $\Gamma$ is the boundary of a Lipschitz domain, in which case $d=n-1$, so that $\alpha_p(\Gamma)=\frac{1}{p}$.
We note that \cite[Proposition 6.7]{CHM21} also provides generalisations of \eqref{E:kerTrmu} for larger $\alpha$ involving traces of partial derivatives, and for 
a class of Besov spaces. 

{
We note that the
result of \cite[Proposition 6.7]{CHM21} in the case $\alpha_p(\Gamma)<\alpha<\alpha_p(\Gamma) + 1$ was claimed previously in \cite[Proposition 19.5]{Triebel01}, under the additional assumption that the $d$-set $\Gamma$ is compact. However, as was remarked on in \cite{CHM21}, we consider the proof of \cite[Proposition 19.5]{Triebel01} to be incomplete, as it assumes \eqref{E:wish} without proper 
justification. The proof of \cite[Proposition 6.7]{CHM21} avoids this, arguing instead via extension operators and a trick used previously in \cite{Marschall87,Wallin91,FarkasJacob2001} involving approximation by continuous functions, for which \eqref{E:wish} is trivial (see Remark \ref{R:trivial} below).}

In the context of the wave scattering application mentioned above, the wish to 
consider 
scattering problems 
that are more physically realistic 
makes it desirable to have 
{\eqref{E:kerTrmu} and \eqref{E:wish}} 
for classes of sets $\Gamma$ more general than $d$-sets.  For modeling purposes it seems particularly interesting to 
{consider} 
sets $\Gamma$ which may have parts of different Hausdorff dimensions and measures $\mu$ with {(possibly discontinuously)} varying pointwise dimension.
Such more general sets $\Gamma$ were also considered in the recent studies \cite{HR-PT2021, HR-PT2023, CHR-PT24} of boundary value problems on varying domains and measure-free approaches to layer potential operators.

We are not aware of any results extending 
{\eqref{E:kerTrmu} and \eqref{E:wish}} 
beyond the $d$-set case. 
Here we provide such an extension. 
Our main results 
are stated in Sections \ref{S:results} and \ref{S:kernels}, 
see 
Theorems \ref{T:main} and \ref{T:main_logcase}, and Corollaries \ref{C:dset}, \ref{C:logHoelder}, \ref{C:trace}, \ref{C:dsettrace} and \ref{C:variant}. 
We consider supports $\Gamma=\supp \mu$ of (nonnegative) Radon measures $\mu$ on $\mathbb{R}^n$. The measures $\mu$ are assumed to be absolutely continuous with respect to $(\alpha,p)$-capacity in the aforementioned sense. The subsets $\Gamma$ of $\mathbb{R}^n$ are closed and may have zero Lebesgue measure.


{
In Theorem \ref{T:main} we prove \eqref{E:wish} 
under the assumption}  
that at each $x\in \Gamma$ the measure $\mu$ satisfies a kind of asymptotically-localized upper regularity condition, see \eqref{E:uniform} and \eqref{E:fatDbar}. This condition involves a 
{dimension function} 
$d(x)$, {which must be large enough, 
{uniformly on $\Gamma$,} 
to ensure that {any relatively open subset of} $\Gamma$ has positive $(\alpha,p)$-capacity, \eqref{E:traceconddx}, and has to satisfy {a logarithmically quantified lower-semicontinuity condition, \eqref{E:L}. 
}} 
We further assume that at $(\alpha,p)$-q.e.\ $x\in \Gamma$ the usual upper density with the 
parameter $d(x)$ is
strictly positive, \eqref{E:suffconddx}. This prevents $\mu$ from having too large \enquote{holes}, i.e., sets where the upper density degenerates.
{In Corollaries \ref{C:dset} and  \ref{C:logHoelder} we consider two examples: piecewise constant $d(x)$, and $\log$-H\"older continuous $d(x)$. 
In Theorem \ref{T:main_logcase} we provide a logarithmic refinement of Theorem \ref{T:main} in the case where $\alpha=n/p$. In Corollaries \ref{C:trace}, \ref{C:dsettrace} and \ref{C:variant} we apply our results for \eqref{E:wish} to deduce results for \eqref{E:kerTrmu} and its higher-order generalisations.} 

As well as extending far beyond the $d$-set case, our results also provide novel contributions in the $d$-set case. Specifically, we are able to extend the results from \cite[Proposition 6.7]{CHM21} to include the case $d=n$, and to include certain limiting values of $\alpha$; see Corollaries \ref{C:dset} and \ref{C:dsettrace} and Remark \ref{R:improve}. 

As mentioned above, the proof in \cite{CHM21} of \eqref{E:wish} in the $d$-set case relies on extension operators. The conceptual
novelty in our proof of \eqref{E:wish} is that we avoid them and only use fine continuity properties. The basic argument is to approximate $\widetilde{u}(x)$ by averages of $\widetilde{u}$. This idea dates back to \cite{Meyers75}, was already employed in \cite[Chapter VIII, Proposition 2]{JW84}, \cite[Theorem 1]{Wallin91} and \cite[Proposition 3.3]{Caetano2000} and mentioned in \cite[Remark 3.3.5]{Ziemer}. However, the way we use it is rather different and seems to be new. In \cite{Caetano2000, JW84, Wallin91, Ziemer} averages were taken with respect to the Lebesgue measure, but here we take averages with respect to $\mu$. At $(\alpha,p)$-q.e.\ $x$, the convergence of averages to $\widetilde{u}(x)$ is obtained from the $(\alpha,p)$-fine continuity of $\widetilde{u}$. At each such $x$, an $(\alpha,p)$-thin set has to be excluded when taking ball averages, and one has to ensure that the measure of the resulting \enquote{reduced} balls remains positive. This is done by combining the definition of $(\alpha,p)$-thinness, localized variants of well-known \enquote{isoperimetric} inequalities for $\mu$ \cite[Section 8.5]{Mazya85}, and the $(\alpha,p)$-q.e.\ strict positivity assumption on the upper density. 

The above discussions relate to trace operators defined on $H^\alpha_p(\R^n)$. There is also a considerable body of literature on trace operators mapping a Sobolev space on an open set $\Omega\subsetneqq\mathbb{R}^n$ to a function space on some closed subset of $\overline{\Omega}$, e.g.\ $\Gamma=\partial\Omega$. 
Under appropriate assumptions on $\Omega$ it is possible to derive results about the kernels of trace operators on $\Omega$ from corresponding results on $\R^n$, 
by composition with a suitable extension operator. We explore this topic in Section \ref{S:domains}. 

Our principal focus in Section \ref{S:domains} is on the fractional Sobolev spaces $H_p^\alpha(\Omega)$ defined in terms of restriction \cite[Definition 4.2.1]{Triebel78}. {A special case of Corollary \ref{C:traceop} shows that} if $\mu$ and $\Gamma$ satisfy the assumptions of Theorem \ref{T:main}, 
$\overline{\Omega}\setminus\Gamma$ is dense in $\overline{\Omega}$, and 
$\Omega$ satisfies the so-called measure density condition (see \cite{Hajlaszetal08},  \eqref{E:measuredenscond} and Remark \ref{R:measuredenscond} below), then for appropriate values of $\alpha$ the kernel of a canonical trace operator mapping $H^\alpha_p(\Omega)$ into $L^p(\Gamma,\mu)$ can be characterised as the closure of $C^\infty_c(\R^n\setminus\Gamma)$ in $H^\alpha_p(\Omega)$. 
Our proof uses the fact that the measure density condition guarantees both the existence of a bounded linear extension operator $\mathrm{E}_\Omega:H_p^\alpha(\Omega)\to H^\alpha_p(\R^n)$ with $\mathrm{E}_\Omega u|_\Omega=u$ for every $\alpha>0$ \cite[Theorem 1.1(a)]{Rychkov00}, and also, crucially, that if $u\in H^\alpha_p(\R^n)$ vanishes a.e.\ with respect to Lebesgue measure on $\Omega$ then $\tilde{u}$ vanishes $(\alpha,p)$-q.e.\ on $\overline{\Omega}$ (see {Theorem \ref{T:domain0} and} Corollary \ref{C:domain}). 
Examples of domains satisfying the measure density condition include $(\varepsilon,\delta)$-domains \cite{Jones81} and, in particular, Lipschitz domains.

For positive integer $m$ and Sobolev spaces $W_p^m(\Omega)$ defined in terms of intrinsic norms, if there is a bounded linear extension operator $\mathrm{E}_\Omega:W_p^m(\Omega)\to W^m_p(\R^n)$ with $\mathrm{E}_\Omega u|_\Omega=u$ then $W_p^m(\Omega)=H_p^m(\Omega)$ with equivalent norms, and the measure density condition is automatically satisfied \cite[Theorem 2]{Hajlaszetal08}. In this case characterizations of the kernels of trace operators on $W_p^m(\Omega)$ can be obtained from those of trace operators on $H_p^m(\Omega)$. 
It is well known that the above $W_p^m$ extension property holds for every $(\varepsilon,\delta)$-domain \cite[Theorem 1]{Jones81}, \cite[Theorem 8]{Rogers06} and, in particular, every Lipschitz domain \cite{Calderon61}, \cite[Chapter VI, Theorem 5]{Stein70}.

Regarding related literature, we note that for $\Gamma=\partial\Omega$ characterizations of the kernel of trace operators mapping from $H_p^\alpha(\Omega)$ or $W_p^m(\Omega)$ into $L^p(\partial\Omega)$ as the closure 
of $C_c^\infty(\Omega)$ in $H_p^\alpha(\Omega)$, 
respectively $W_p^m(\Omega)$,
are well known in the case of smooth domains \cite{Necas67, Triebel78}. For the spaces $W_p^m(\Omega)$ a corresponding result in the case of Lipschitz domains was provided in \cite[Theorem 1]{Marschall87} and in the case of $(\varepsilon,\delta)$-domains with a $d$-set boundary in \cite[Theorem 3]{Wallin91}.  For the spaces $H_p^\alpha(\Omega)$ with $0<\alpha\leq 1$ such a result was proved in \cite[Theorem 3.5]{FarkasJacob2001} when $\Omega$ is a bounded $(\varepsilon,\delta)$-domain with a $d$-set boundary. 
Our results in Section \ref{S:domains} provide a generalization of the above results to more general boundaries and general $\alpha>0$.

While images of trace operators are not the focus of this paper, we note briefly that characterizations of the image $[\widetilde{B}(\Gamma)]_\mu$ of $[\mathrm{Tr_{\Gamma,0}}]_\mu$ are well known for smooth submanifolds $\Gamma$, for boundaries $\Gamma$ of Lipschitz domains, for $d$-sets $\Gamma$, equipped with a $d$-measure $\mu$ \cite{JW84, Triebel97},
and even for more general closed subsets $\Gamma$ if they are the support of a measure $\mu$ satisfying certain refined doubling conditions \cite{Jonsson94}. 

The structure of the paper is as follows. In Section \ref{S:results} we recall basic notation and facts and state our main results, Theorems \ref{T:main} and \ref{T:main_logcase}. Consequences for the kernels of trace operators are discussed in Section \ref{S:kernels} and applications to domains in Section \ref{S:domains}. Implications for the convergence of Galerkin schemes for operator equations on compact sets $\Gamma$ are briefly commented on in Section \ref{S:Galerkin}. The proof of our main results is provided in Sections \ref{S:iso} and \ref{S:proof}. 

Given $1<p<\infty$, we write $p'$ for its H\"older conjugate, determined by $\frac1p+\frac{1}{p'}=1$. The symbol $B(x,r)$ denotes an open ball with center $x\in\mathbb{R}^n$ and radius $r>0$. In this article we consider real-valued functions. The main results carry over to complex-valued functions by applying them separately to their real and imaginary parts.

\stoptoc
\section*{Acknowledgements}
The first-named author thanks Takashi Kumagai for inspiring and helpful remarks.
\resumetoc

\section{{Notation and the main equivalence results}}\label{S:results}

Given {$n\geq 1$,} $1<p<\infty$ and $\alpha>0$, we write $H^\alpha_p(\mathbb{R}^n)$ for the \emph{fractional Sobolev space} (or \emph{Bessel potential space}) consisting of all $u\in L^p(\mathbb{R}^n)$ such that $((1+|\cdot|^2)^{\alpha/2} \hat u)^\vee \in L^p(\mathbb{R}^n)$; here $\varphi\mapsto \hat{\varphi}$ denotes the Fourier transform acting on tempered distributions and $\varphi\mapsto \check{\varphi}$ its inverse. Normed by 
\[\|u\|_{H^\alpha_p(\mathbb{R}^n)}:=\|((1+|\cdot|^2)^{\alpha/2} \hat u)^\vee\|_{L^p(\mathbb{R}^n)}\]
the spaces $H^\alpha_p(\mathbb{R}^n)$ are Banach spaces, for $p=2$ Hilbert. Given an open set $\Omega\subset \mathbb{R}^n$, we write $C_c^\infty(\Omega)$ for the space of all smooth functions with compact support in $\Omega$.

Given a compact set $K\subset\mathbb{R}^n$, its \emph{$(\alpha,p)$-capacity} is 
\[\cpct_{\alpha,p}(K):=\inf\big\lbrace  \|u\|_{H^\alpha_p(\mathbb{R}^n)}^p:\ u\in C_c^\infty(\mathbb{R}^n),\ u\geq 1\ \text{on $K$}\big\rbrace.\]
For open $G\subset\mathbb{R}^n$ we define $\cpct_{\alpha,p}(G):=\sup_K\cpct_{\alpha,p}(K)$, the supremum ranging over all compact $K\subset G$. For general Borel sets $E\subset \mathbb{R}^n$ we can consistently define $\cpct_{\alpha,p}(E):=\inf_G\cpct_{\alpha,p}(G)$ with the infimum taken over all open $G\supset E$. Background can be found in \cite[Section 2.2]{AH96}, a discussion of different notations and the equivalence of different definitions for $\cpct_{\alpha,p}$ in \cite[Remark 3.2]{HM2017}. A property which holds outside a set of zero $(\alpha,p)$-capacity is said to hold \emph{$(\alpha,p)$-quasi everywhere} or, for short, \emph{$(\alpha,p)$-q.e.} For $\alpha>\frac{n}{p}$ we have $H^\alpha_p(\mathbb{R}^n)\subset C(\mathbb{R}^n)$ by the Sobolev embedding theorem and, as a consequence, any nonempty set has positive $(\alpha,p)$-capacity, so that a property which holds $(\alpha,p)$-q.e.\ holds everywhere.

An extended real-valued function $v$ defined $(\alpha,p)$-q.e.\ on $\mathbb{R}^n$ is called \emph{$(\alpha,p)$-quasi continuous} if for any $\varepsilon>0$ there is an open set $G\subset\mathbb{R}^n$ with $\cpct_{\alpha,p}(G)<\varepsilon$ outside of which $v$ is continuous. For every $u\in H^\alpha_p(\mathbb{R}^n)$ we can find an $(\alpha,p)$-quasi continuous representative $\widetilde{u}$, that is, an $(\alpha,p)$-quasi continuous Borel function on $\mathbb{R}^n$ such that $\widetilde{u}=u$ $\mathcal{L}^n$-a.e. Here and below $\mathcal{L}^n$ denotes $n$-dimensional Lebesgue measure. Two $(\alpha,p)$-quasi continuous representatives $\widetilde{u}$ of $u \in H^\alpha_p(\mathbb{R}^n)$ can differ only on a set of zero $(\alpha,p)$-capacity. {See \cite[Propositions 2.3.7, 6.1.2, 6.1.3 and Theorem 6.1.4]{AH96} for these properties.} For $\alpha>\frac{n}{p}$ an $(\alpha,p)$-quasi continuous function is continuous, so that there is exactly one $(\alpha,p)$-quasi continuous representative $\widetilde{u}$ of $u\in H_p^\alpha(\mathbb{R}^n)$, and $\tilde u$ is continuous.

We call a non-decreasing and right-continuous function $h:[0,1)\to [0,\infty)$ a \emph{Hausdorff function} if $h(0)=0$, $h$ is strictly positive on $(0,1)$, and for some $c>1$ we have $h(2r)\leq c\,h(r)$, $0<r<\frac12$. We write $\mathcal{H}^h$ for the \emph{Hausdorff measure associated with $h$} \cite{AH96, Mazya85,Zaehle}. If $0\leq d\leq n$ and $h(r)=r^d$, $0\leq r<1$, then we write $\mathcal{H}^d$; this is \emph{$d$-dimensional Hausdorff measure}. 

{ 
Let $\mu$ be a Radon measure \cite[Definition 1.5]{Mattila} on $\mathbb{R}^n$ and let $x\in\mathbb{R}^n$. Given a Hausdorff function $h$, the \emph{upper $h$-density of $\mu$ at $x$} is defined as
\[\bar{D}^{h}\mu(x):=\limsup_{r\to 0}\frac{\mu(B(x,r))}{h(r)}.\]
{Set $\Gamma:=\supp\mu$ and let $d:\Gamma\to [0,\infty)$  be a Borel function.} If $0< d(x)\leq n$ and $h(r)=r^{d(x)}$, $0\leq r<1$, then $\bar{D}^{h}\mu(x)$ is  called the \emph{upper $d(x)$-density of $\mu$ at} $x$ and is denoted by $\bar{D}^{d(x)}\mu(x)$. See \cite[Definition 6.8]{Mattila} and \cite[Definition 3.1]{Zaehle}. We also consider a kind of locally uniform variant of the upper $d(x)$-density. Given $x\in \Gamma$ and $0<r\leq 1$, we set
\[\underline{d}(x,r):=\inf_{y\in B(x,r)\cap \Gamma}d(y)\]
and write
\begin{equation}\label{E:uniform}
\bar{\mathbb{D}}^{d(x)}\mu(x):=\limsup_{r\to 0} \sup_{y\in B(x,r)\cap \Gamma,\ 0<\varrho\leq r}\frac{\mu(B(y,\varrho))}{\varrho^{\underline{d}(x,r)}}.
\end{equation}
We also introduce the quantity 
\[L_d(x){:=\limsup_{r\to 0}\big(d(x)-\underline{d}(x,r)\big)(-\log r)}=\limsup_{r\to 0}\sup_{y\in B(x,r)\cap \Gamma}\big(d(x)-d(y)\big)(-\log r).
\]


%

{All the results in this section hold for all dimensions $n\geq 1$.} Our main result is the following. 
 
\begin{theorem}\label{T:main}
Let $\mu$ be a non-zero Radon measure on $\mathbb{R}^n$, $\Gamma=\supp\mu$ and $1<p<\infty$. Suppose that $0<\alpha\leq \frac{n}{p}$ and that $d:\Gamma\to [0,\infty)$ is a Borel function such that 
\begin{equation}\label{E:L}
L_d(x)<\infty,\qquad x\in \Gamma,
\end{equation}
\begin{equation}\label{E:fatDbar}
\bar{\mathbb{D}}^{d(x)}\mu(x)<\infty, \qquad x\in \Gamma,
\end{equation}
\begin{equation}\label{E:traceconddx}
n-\alpha p<\inf_{x\in \Gamma}d(x),
\end{equation}
 and 
\begin{equation}\label{E:suffconddx}
\cpct_{\alpha,p}(S_{d,\mu}(\Gamma))=0,
\end{equation}
where 
\begin{equation}\label{E:Sd}
S_{d,\mu}(\Gamma) := \{x\in\Gamma:\ \bar{D}^{d(x)}\mu(x)=0\}.
\end{equation}  
Then the equivalence \eqref{E:wish} holds for any $u\in H^\alpha_p(\mathbb{R}^n)$. 
\end{theorem}

\begin{remark}\label{R:trivial} For convenience we require that $\alpha\leq \frac{n}{p}$ in our statement of Theorem \ref{T:main}. But, as noted above, if $\alpha>\frac{n}{p}$ then for any  $u\in H^\alpha_p(\mathbb{R}^n)$ the (unique) $(\alpha,p)$-quasi continuous representative $\widetilde{u}$ is continuous, so that  {\eqref{E:wish} is straightforward from the fact that $\Gamma=\supp\mu$.}
\end{remark}

{\begin{remark}\label{R:basicimplication}\mbox{}
\begin{enumerate}
\item[(i)] If $L_d(x)<\infty$ at a point $x\in \Gamma$, then 
\[\bar{D}^{d(x)}\mu(x)=\limsup_{r\to 0} \frac{\mu(B(x,r))}{r^{d(x)}}\leq \limsup_{r\to 0}r^{\underline{d}(x,r)-d(x)}\limsup_{r\to 0} \frac{\mu(B(x,r))}{r^{\underline{d}(x,r)}}\leq e^{L_d(x)}\bar{\mathbb{D}}^{d(x)}\mu(x).\]
In particular, if \eqref{E:L} and \eqref{E:fatDbar} hold, 
then also
\begin{equation}\label{E:Dbar}
\bar{D}^{d(x)}\mu(x)<\infty, \qquad x\in \Gamma.
\end{equation}
\item[(ii)] {If \eqref{E:L} holds}, then $d$ is lower semicontinuous {and $d(x)=\lim_{r\to 0} \underline{d}(x,r)$ at each $x\in \Gamma$.} 
\end{enumerate}
\end{remark}
}

The first important special case is that of a constant function $d(\cdot)\equiv d$. If $0<d\leq n$ is constant and 
\begin{equation} \label{E:dset}
c^{-1}r^d\leq \mu(B(x,r))\leq cr^d, \quad 
x\in \Gamma=\supp \mu, \quad 0<r\leq 1,
\end{equation} 
where $c>1$ is a fixed constant, then $\mu$ is called a \emph{$d$-measure} or an \emph{Ahlfors $d$-regular measure}. {Recall that if for a given set $\Gamma\subset \mathbb{R}^n$ 
there is a Radon measure $\mu$ such that \eqref{E:dset} holds, then  $\Gamma$ is called a \emph{$d$-set};} see \cite[Section II.1]{JW84} or \cite[Definition 3.1]{Triebel97}. In this case $\dim_H\Gamma=d$.

\begin{remark}\label{R:main}\mbox{}
\begin{enumerate}
\item[(i)] If $\Gamma$ is a $d$-set, then \eqref{E:dset} holds (for some possibly different $c>1$) with $\mu$ replaced by $\cH^d|_\Gamma$, see \cite[Theorem 3.4]{Triebel97}. For our calculations below we note that, if the second of the inequalities in \eqref{E:dset} holds for $x\in \Gamma$ and $0<r\leq 1$, then, by a simple triangle inequality argument, also  $\mu(B(x,r))\leq c\:2^dr^d$ for all $x\in \R^n$ and $0<r\leq \frac{1}{2}$.  
\item[(ii)] If $\mu$ is a $d$-measure and \eqref{E:dset} holds, then $\mu$ satisfies the doubling condition 
\begin{equation}\label{E:usualdoubling}
\mu(B(x,2r))\leq c_D\mu(B(x,r)),\qquad x\in\Gamma,\quad 0<r\leq \frac12, 
\end{equation}
with a constant $c_D>0$. Here we can use $c_D=c^22^d$.
\end{enumerate}
\end{remark}

\begin{examples}{ 
It is well known that all attractors of iterated function systems of contracting similarities satisfying the open set condition are $d$-sets \cite{Moran46, Hutchinson81, Falconer97}. The same is true for attractors of conformal iterated function systems, see \cite{MauldinUrbanski96, Zaehle2001}.}
\end{examples}

\begin{corollary}\label{C:dset0}
{Let $1<p<\infty$, $0<d\leq n$, let $\mu$ be a $d$-measure and $\Gamma=\supp \mu$. If $\alpha>\frac{n-d}{p}$, then \eqref{E:wish} holds for any $u\in H^\alpha_p(\mathbb{R}^n)$.}
\end{corollary}
\begin{proof}
Since $d$ is constant, $\underline{d}(x,r)=d$ and $L_d(x)=0$ for all $x\in \Gamma$ and $0<r\leq 1$. Condition \eqref{E:dset} gives $c^{-1}\leq \bar{D}^d\mu(x)\leq \bar{\mathbb{D}}^d\mu(x)\leq c$ for all $x\in\Gamma$. {Consequently $\mu$ and $d$ satisfy the conditions \eqref{E:L}, \eqref{E:fatDbar}, \eqref{E:traceconddx}, and \eqref{E:suffconddx} of Theorem \ref{T:main} with $S_{d,\mu}(\Gamma)=\emptyset$. This gives the result for $\alpha\leq \frac{n}{p}$, Remark \ref{R:trivial} gives it for $\alpha>\frac{n}{p}$.} 
\end{proof}

\begin{remark}\label{R:main2}
If $0<d<n$, $\frac{n-d}{p}<\alpha<\frac{n-d}{p}+1$, $\mu$ is a $d$-measure and $\Gamma=\supp \mu$, then the validity of \eqref{E:wish} for any $u\in H^\alpha_p(\mathbb{R}^n)$ also follows from an application of \cite[Proposition 6.7]{CHM21} in the special case of fractional Sobolev spaces, together with \cite[Corollary 10.1.2]{AH96}. {Corollary \ref{C:dset0} extends this result in two ways: it includes the case $d=n$, and, in the case that $\frac{n-d}{p}+1\leq \frac{n}{p}$, i.e.\ $d\geq p$, it extends the range of $\alpha$ for which \eqref{E:wish} holds.} The results provided in \cite[Section 6]{CHM21} are for both fractional Sobolev spaces and Besov spaces. If $p=2$ and $\alpha=\frac{n-d}{2}$, then \eqref{E:wish} follows from \cite[Corollary 18.12]{Triebel97} together with \cite[Remark 3.5 (i) and Proposition 4.3]{Caetano2002}. 
\end{remark}

Theorem \ref{T:main} also applies to measures $\mu$ whose support $\Gamma$ has parts of different Hausdorff dimensions.

\begin{examples}\label{Ex:hybrid} Let $\Gamma_1\subset \mathbb{R}^2$ be the classical Koch curve \cite{Falconer97}, located so that it has starting point $(-1,0)$ and endpoint $(0,0)$, and let $\mu_1=\mathcal{H}^{d}|_{\Gamma_1}$ with $d=\frac{\log 4}{\log 3}$. Let $\Gamma_2=[0,1]\times \{0\}$, endowed with $\mu_2=\mathcal{H}^1|_{\Gamma_2}$, and consider $\mu:=\mu_1+\mu_2$. Obviously $\Gamma:=\supp{\mu}=\Gamma_1\cup\Gamma_2$. If
\[d(x)=\begin{cases} d & \text{for $x\in \Gamma_1\setminus \{(0,0)\}$,} \\ 1 & \text{for $x\in \Gamma_2$,}\end{cases}\]
then for any $x\in \Gamma$ we have $L_d(x)=0$ and $0<\bar{D}^{d(x)}\mu(x)\leq \bar{\mathbb{D}}^{d(x)}\mu(x)<\infty$. 
\end{examples}

The following consequence of Theorem \ref{T:main} generalizes both Corollary \ref{C:dset0} and {Example} \ref{Ex:hybrid}.

\begin{corollary}\label{C:dset} {
Let $1<p<\infty$. Suppose that $J$ is a positive integer, and, for $j=1,\ldots, J$, that $0<d_j\leq n$, $\Gamma_j\subset \R^n$ is a $d_j$-set, $n-\alpha p <d_j$ and $a_j>0$. Let
$$
\mu := \sum_{j=1}^J a_j\cH^{d_j}|_{\Gamma_j}, \qquad \Gamma := \bigcup_{j=1}^J \Gamma_j = \supp \mu,
$$ 
and, for $x\in \Gamma$, let $I(x):= \{j\in \{1,\ldots,J\}:x\in \Gamma_j\}$ and $d(x):= \min\{d_j:j\in I(x)\}$. Then \eqref{E:wish} holds for any $u\in H^\alpha_p(\mathbb{R}^n)$.}
\end{corollary}

\begin{proof} Clearly $d$ is a Borel function, with $\inf_{x\in \Gamma}d(x)=\min\{d_1,\ldots,d_J\}>n-\alpha p$.  Since all $\Gamma_j$ are closed, each $x\in \Gamma$ has a minimal distance $r(x)>0$ to all $\Gamma_j$ with $j\notin I(x)$, so that $\underline{d}(x,r)=d(x)$ for all $r<r(x)$. Therefore  $L_d(x)=0$, $x\in \Gamma$. As noted in Remark \ref{R:main} (i), since each $\Gamma_j$ is a $d_j$-set,  equation \eqref{E:dset} holds for some $c>1$ with $d$, $\Gamma$, and $\mu$ replaced by $d_j$, $\Gamma_j$, and $\cH^{d_j}|_{\Gamma_j}$, respectively, for $j=1,\ldots, J$, indeed, if $0<r\leq 1/2$, the right-hand inequality in \eqref{E:dset} holds for all $x\in \R^n$ and all $j=1,...,N$ if $c$ is replaced by $c2^d$. Thus,
 for $x\in \Gamma$, where $\mu_j:= a_j\cH^{d_j}|_{\Gamma_j}$,
$$
\bar{\mathbb{D}}^{d(x)}\mu(x) \leq  \sum_{j=1}^J \bar{\mathbb{D}}^{d_j}\mu_j(x)  \leq  c2^d\sum_{j=1}^J a_j.
$$
Similarly, where $\underline{a}:=\min\{a_1,\ldots,a_J\}$, 
$\bar{D}^{d(x)}\mu(x) \geq c^{-1}\underline{a}$, for $x\in \Gamma$, since $I(x)\neq \emptyset$ for every $x\in \Gamma$. {Consequently $\mu$ and $d$ satisfy the conditions \eqref{E:L}, \eqref{E:fatDbar}, \eqref{E:traceconddx} and \eqref{E:suffconddx} of Theorem \ref{T:main} with $S_{d,\mu}(\Gamma)=\emptyset$. Together with Remark \ref{R:trivial}, the result follows.}
\end{proof}

\begin{remark} 
Unless all $d_j$ are equal, the measures $\mu$ in Corollary \ref{C:dset} fail to satisfy any doubling condition \eqref{E:usualdoubling}. In particular, the measure $\mu$ in 
{Example} \ref{Ex:hybrid} is not doubling. 
\end{remark}

Theorem \ref{T:main} also applies to {certain non-constant} continuous functions $d$. In \cite[Lemma 2.1]{Harjulehto2006} it was proved that if a Radon measure $\mu$ is such that
\begin{equation} \label{E:dxset}
c^{-1}r^{d(x)}\leq \mu(B(x,r))\leq cr^{d(x)}, \quad 
x\in \Gamma=\supp \mu, \quad 0<r\leq 1,
\end{equation} 
where $c>1$ is a fixed constant and $d:\Gamma\to (0,\infty)$ is a bounded Borel function, then 
\begin{equation}\label{E:logHoelderglobal}
|d(x)-d(y)|(-\log|x-y|)<c^22^{\bar{d}}
\end{equation}
for all $x,y\in \Gamma$ with $|x-y|<\frac12$; here $\bar{d}:=\sup_{x\in \Gamma}d(x)$. 

\begin{remark}
{Obviously} \eqref{E:dxset} generalizes \eqref{E:dset}. Any measure $\mu$ as in \eqref{E:dxset} satisfies the doubling condition \eqref{E:usualdoubling} with $c_D=c^2 2^{\bar{d}}$, as already pointed out in \cite{Harjulehto2006}. 
\end{remark}

{ 
\begin{corollary}\label{C:logHoelder} {
Let $1<p<\infty$, let $\mu$ be a non-zero Radon measure on $\mathbb{R}^n$, $\Gamma=\supp\mu$, and $\alpha>0$. If $\alpha\leq \frac{n}{p}$, suppose that $d:\Gamma\to [0,\infty)$ be a bounded Borel function such that \eqref{E:traceconddx} and \eqref{E:dxset} hold. Then \eqref{E:wish} holds for any $u\in H^\alpha_p(\mathbb{R}^n)$. }
\end{corollary} 

\begin{proof} {By Remark \ref{R:trivial} it suffices to discuss the case $\alpha\leq \frac{n}{p}$.} For any $x\in \Gamma$ and $0<r\leq 1$, the right inequality in \eqref{E:dxset} gives
$\mu(B(y,\varrho))\leq c \varrho^{d(y)}\leq c\varrho^{\underline{d}(x,r)}$ for all $y\in B(x,r)\cap \Gamma$ and $0<\varrho\leq r$, so that 
$\bar{\mathbb{D}}^{d(x)}\mu(x)<\infty$. Obviously \eqref{E:logHoelderglobal} implies that
\[L_d(x)\leq \limsup_{r\to 0}\sup_{y\in B(x,r)\cap \Gamma}|d(x)-d(y)|(-\log r)<\infty,\qquad x\in\Gamma.\]
By the left inequality in \eqref{E:dxset} we have $\bar{D}^{d(x)}\mu(x)>c^{-1}$ at all $x\in \Gamma$. {It follows that $\mu$ and $d$ also satisfy the conditions \eqref{E:L}, \eqref{E:fatDbar} and \eqref{E:suffconddx} of Theorem \ref{T:main} with $S_{d,\mu}(\Gamma)=\emptyset$.}
\end{proof}
}

\begin{examples}\label{L:logHoelder} 
Given a Lipschitz function $s:[0,1]\to (\frac14,\frac12)$, {it was shown in \cite[Theorem 3.4]{Harjulehto2006}} that one can construct a Koch curve $\Gamma\subset \mathbb{R}^2$, a continuous bijection $\Phi:[0,1]\to\Gamma$ and a finite Radon measure $\mu$ such that \eqref{E:dxset} holds with {the bounded Borel function $d:\Gamma\to [0,\infty)$ defined by }
\[d(x):=\frac{\log 4}{-\log s(\Phi^{-1}(x))},\qquad x\in \Gamma.\]
{Clearly, \eqref{E:traceconddx} also holds as long as 
\[\frac{\log 4}{-\log\min_{0\leq t\leq 1}s(t)}>2-\alpha p.\]
}
\end{examples}

\begin{remark}\label{R:upregd} Let $\mu$ be a non-zero Radon measure on $\mathbb{R}^n$ and $\Gamma=\supp\mu$ and let $d:\Gamma\to [0,\infty)$ be a Borel function.
\begin{enumerate}
\item[(i)] {If \eqref{E:fatDbar} holds, then} for each $x\in \Gamma$ there is some $r(x)>0$ such that for all {
$0<\varrho\leq r\leq r(x)$ and all {$y\in B(x,r)\cap \Gamma$}} we have 
\begin{equation}\label{E:upregdx}
\mu(B(y,\varrho))\leq c(x){ \varrho^{\underline{d}(x,r)}}, 
\end{equation}
where {$c(x):=2\:\bar{\mathbb{D}}^{d(x)}\mu(x)$ if $\bar{\mathbb{D}}^{d(x)}\mu(x)>0$ and $c(x):=1$ otherwise.} Together with \cite[Theorem 6.9]{Mattila} this implies that $\underline{d}(x,r)\leq \dim_H(B(x,r)\cap \Gamma)\leq \dim_H\Gamma$ for all $x\in \Gamma$ and $r\leq r(x)$. {If in addition \eqref{E:L} holds,} {then by Remark \ref{R:basicimplication} (ii) we find that} 
\begin{equation}\label{E:leqdimH}
\sup_{x\in\Gamma}d(x)\leq \dim_H\Gamma\leq n.
\end{equation}
If the set $\Gamma$ is compact, then it admits a finite cover $\{B(x_i,r(x_i))\}_{i=1}^N$ by open balls with $x_i\in \Gamma$, $i=1,...,N$. {If \eqref{E:fatDbar} holds, then} $\mu$ is \emph{upper $d_{\min}:=\min_i \underline{d}(x_i,r(x_i))$-regular}, more precisely, we have 
$\mu(B(y,\varrho))\leq c_{\max}\,\varrho^{d_{\min}}$
for all $y\in\Gamma$ and $0<\varrho\leq r_{\min}$, where $c_{\max}:=\max_i c(x_i)$ and $r_{\min}:=\min_i r(x_i)$. 
\item[(ii)] Recall \eqref{E:Sd}. Since $\bar{D}^{d(x)}\mu(x)>0$ for all $x\in \Gamma\setminus S_{d,\mu}(\Gamma)$, we have $\bar{D}^s\mu(x)=\infty$ for all $x\in  \Gamma\setminus S_{d,\mu}(\Gamma)$ and $s>\sup_{x\in \Gamma\setminus S_{d,\mu}(\Gamma)}d(x)$. By \cite[Theorem 6.9]{Mattila} it follows that for {any such $s$ and} any open ball $B$ we have $\mathcal{H}^s(B\cap \Gamma\setminus S_{d,\mu}(\Gamma))=0$, which implies that $\mathcal{H}^s(\Gamma \setminus S_{d,\mu}(\Gamma))=0$. Therefore 
\begin{equation}\label{E:geqdimH}
\dim_H(\Gamma\setminus S_{d,\mu}(\Gamma))\leq \sup_{x\in \Gamma\setminus S_{d,\mu}(\Gamma)}d(x).
\end{equation}
{If \eqref{E:L} and \eqref{E:fatDbar} hold and $S_{d,\mu}(\Gamma)=\emptyset$, then \eqref{E:leqdimH} and \eqref{E:geqdimH} combine to
\begin{equation}\label{E:dimH}
\dim_H\Gamma=\sup_{x\in\Gamma}d(x).
\end{equation}}
\item[(iii)] If $0<\alpha\leq \frac{n}{p}$ and \eqref{E:Dbar} and \eqref{E:traceconddx} are assumed, then condition \eqref{E:suffconddx} implies that
\begin{equation}\label{E:notquite}
\mu(S_{d,\mu}(\Gamma))=0,
\end{equation}
cf. Lemma \ref{L:smooth} below. The converse implication is false, {even if $d$ is constant}. To see this, suppose that $d(\cdot)\equiv n=2$, $p=2$ and $\alpha=1$. Let $\Gamma=[0,1]^2$ and let $S\subset [0,1]^2$ be such that $\mathcal{L}^2(S)=0$ but  $\cpct_{1,2}(S)>0$.
Let $\varphi:[0,1]^2\to [0,\infty)$ be a continuous function with zero set $S$. Then the measure $\mu=\varphi\cdot\mathcal{L}^2|_{[0,1]^2}$ has support $\Gamma=[0,1]^2$, and we have $c^{-1}\varphi(x)\leq \bar{D}^2\mu(x)\leq c\:\varphi(x)$, $x\in [0,1]^2$, for some constant $c>1$. {Obviously {\eqref{E:L} and }
\eqref{E:traceconddx} hold, and since $\varphi$ is bounded on $\Gamma$ also \eqref{E:fatDbar} and \eqref{E:Dbar}.} In this situation we have $\mu(\{\varphi=0\})=0$, which is \eqref{E:notquite}, but \eqref{E:suffconddx} does not hold. As one example we may choose $S=[0,1]\times \{0\}$ and $\varphi(x)=x_2$, $x=(x_1,x_2)\in [0,1]^2$. {A variant of this argument gives simple examples where $S_{d,\mu}(\Gamma)$ is nonempty but of zero capacity $\cpct_{1,2}$; we may for instance take $\varphi(x)=|x-(0,\frac12)|$.}
\item[(iv)] One refers to the quantity
\[\underline{\dim}\,\mu(x):=\liminf_{r\to 0}\frac{\log \mu(B(x,r))}{\log r}\]
as the \emph{lower pointwise dimension of $\mu$} at $x$ \cite{Falconer97, Zaehle}. It is well known that $\bar{D}^{d(x)}\mu(x)<\infty$ implies $d(x)\leq \underline{\dim}\,\mu(x)$, while $0<\bar{D}^{d(x)}\mu(x)$ implies  $\underline{\dim}\,\mu(x)\leq d(x)$ \cite[Section 6.1]{Zaehle}. Therefore, if \eqref{E:Dbar} holds, then 
\begin{equation}\label{E:lowerpointwise}
d(x)= \underline{\dim}\,\mu(x) ,\qquad x\in \Gamma\setminus S_{d,\mu}(\Gamma).
\end{equation}
{If \eqref{E:dxset} holds, then at each $x\in \Gamma$ the \emph{pointwise dimension} $\dim\mu(x):=\lim_{r\to 0} \frac{\log \mu(B(x,r))}{\log r}$ \emph{of $\mu$} exists and equals $d(x)$.}
\item[(v)] {In the case $\alpha=\frac{n}{p}$  
Theorem \ref{T:main} remains true with \eqref{E:L} replaced by \eqref{E:Dbar} and with \eqref{E:suffconddx} replaced by the weaker condition $\cpct_{\alpha,p}(\{x\in\Gamma:\underline{\dim}\,\mu(x)> d(x) \})=0$. See the proof of Lemma \ref{L:lessthanone}.}
\end{enumerate}
\end{remark}

We state a logarithmic {variant} of Theorem \ref{T:main} for the case $\alpha=\frac{n}{p}$. Note that for each $\kappa>0$ the function $h_\kappa(r)=(-\log r)^{-\kappa}$ is a Hausdorff function. Given a Borel function $\kappa:\Gamma\to [0,\infty)$, set 
\[\underline{\kappa}(x,r):=\inf_{y\in {B(x,r)\cap}\Gamma}{\kappa(y)}\quad\text{and}\quad \bar{\mathbb{D}}^{h_{\kappa(x)}}\mu(x):=\limsup_{r\to 0}\sup_{y\in B(x,r){ \cap \Gamma},\ 0<\varrho\leq r}{ \frac{\mu(B(y,\varrho))}{(-\log \varrho)^{-\underline{\kappa}(x,r)}}},\quad x\in\Gamma,\]
and let 
\[\ell_\kappa(x):=\limsup_{r\to 0}\big(\kappa(x)-\underline{\kappa}(x,r)\big)\log(-\log r).
\]

\begin{theorem}\label{T:main_logcase}
Let $\mu$ be a non-zero Radon measure on $\mathbb{R}^n$, $\Gamma=\supp\mu$ and $1<p<\infty$. Suppose that $\alpha=\frac{n}{p}$ and that $\kappa:\Gamma\to  [0,\infty)$ is a Borel function such that
\begin{equation}\label{E:Lkappa}
\ell_\kappa(x)<\infty,\qquad x\in \Gamma,
\end{equation}
\begin{equation}\label{E:fatDbarkappa}
{ \bar{\mathbb{D}}^{h_{\kappa(x)}}\mu(x)}<\infty, \qquad x\in \Gamma,
\end{equation}
\begin{equation}\label{E:tracecondhx}
p-1<\inf_{x\in \Gamma}\kappa(x)
\end{equation}
and 
\begin{equation}\label{E:suffcondh}
\cpct_{\alpha,p}(\{x\in\Gamma:\ \bar{D}^{h_{\kappa(x)}}\mu(x)=0\})=0.
\end{equation}
Then \eqref{E:wish} holds for any $u\in H^\alpha_p(\mathbb{R}^n)$.
\end{theorem}

{ 
\begin{remark}\label{R:basicimplicationkappa}\mbox{}
\begin{enumerate}
\item[(i)] If $\ell_\kappa(x)<\infty$, then 
\[{\bar{D}^{h_{\kappa(x)}}\mu(x)\leq \limsup_{r\to 0}(-\log r)^{\kappa(x)-\underline{\kappa}(x,r)}\limsup_{r\to 0} \frac{\mu(B(x,r))}{(-\log r)^{-\underline{\kappa}(x,r)}}\leq e^{\ell_\kappa(x)}\bar{\mathbb{D}}^{h_{\kappa(x)}}\mu(x).}\]
In particular, if \eqref{E:Lkappa} and \eqref{E:fatDbarkappa} hold, then also
\begin{equation}\label{E:Dbarkappa}
{ \bar{D}^{h_{\kappa(x)}}\mu(x)}<\infty, \qquad x\in \Gamma.
\end{equation}
\item[(ii)] Clearly also $\kappa$ is lower semicontinuous, {provided that \eqref{E:Lkappa} holds.}
\end{enumerate}
\end{remark}
}

\begin{remark}\label{R:upreghx} {
Under condition \eqref{E:fatDbarkappa} there is some $r(x)>0$ such that for all $0<\varrho\leq r\leq r(x)$ and all $y\in B(x,r)$ we have 
\begin{equation}\label{E:upreghx}
\mu(B(y,\varrho))\leq c(x)(-\log \varrho)^{-\underline{\kappa}(x,r)}, 
\end{equation}
where $c(x)=\max\{2\bar{\mathbb{D}}^{h_{\kappa(x)}}\mu(x),1\}$.} 
\end{remark}

{Well-known results ensure} that under the assumptions of Theorems \ref{T:main} {or} \ref{T:main_logcase} any Borel set of zero capacity is a $\mu$-null set. 

\begin{lemma}\label{L:smooth}
Let $\mu$ be a  non-zero Radon measure on $\mathbb{R}^n$, $\Gamma=\supp\mu$ and $1<p<\infty$. If
\begin{enumerate}
\item[(i)] $0<\alpha\leq \frac{n}{p}$ and $d:\Gamma\to [0,\infty)$ is a Borel function such that {\eqref{E:Dbar} and \eqref{E:traceconddx} hold} or
\item[(ii)] $\alpha=\frac{n}{p}$ and $\kappa:\Gamma\to  [0,\infty)$ is a Borel function such that {\eqref{E:Dbarkappa} and \eqref{E:tracecondhx} hold,} 
\end{enumerate}
then, for all Borel sets $E\in \mathbb{R}^n$ with $\cpct_{\alpha,p}(E)=0$, we have $\mu(E)=0$.
\end{lemma}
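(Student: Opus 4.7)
The plan is to reduce Lemma \ref{L:smooth} to two classical ingredients: the mass-distribution principle, which upgrades a local growth estimate on $\mu$ into a comparison with a Hausdorff measure, and the capacity–Hausdorff-measure inequality, which forces Hausdorff measures of appropriate gauge to vanish on sets of zero $(\alpha,p)$-capacity.

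For case (i), I would begin from Remark \ref{R:upregd}(i): at every $x\in\Gamma$ there exist $r(x),c(x)>0$ such that $\mu(B(y,\varrho))\leq c(x)\varrho^{d(x)}$ for all $y\in B(x,r(x))$ and $0<\varrho\leq r(x)$. A standard mass-distribution argument (cover $A\cap B(x,r(x))$ by sets of small diameter, each meeting $\Gamma$, pass to a ball around a point of $\Gamma$, apply the growth bound, take the infimum) then yields $\mu(A\cap B(x,r(x)))\leq C(x)\mathcal{H}^{d(x)}(A)$ for every Borel set $A\subset \mathbb{R}^n$. By the Lindelöf property of $\mathbb{R}^n$ I would extract a countable subcover $\{B(x_i,r(x_i))\}_{i\in\mathbb{N}}$ of $\Gamma$ from this pointwise family. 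If now $\cpct_{\alpha,p}(E)=0$, assumption \eqref{E:traceconddx} gives $d(x_i)>n-\alpha p$ for every $i$, and the classical capacity–Hausdorff-measure comparison \cite[Theorem 5.1.9]{AH96} produces $\mathcal{H}^{d(x_i)}(E)=0$. Hence $\mu(E\cap B(x_i,r(x_i)))=0$ for every $i$, and summing over $i$ using $\supp\mu=\Gamma\subset \bigcup_i B(x_i,r(x_i))$ yields $\mu(E)=0$.

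Case (ii) proceeds along identical lines, with the polynomial gauge $r^{d(x)}$ replaced by $h_{\kappa(x)}(r)=(-\log r)^{-\kappa(x)}$: Remark \ref{R:upreghx} supplies the local bound, the mass-distribution step produces $\mu(A\cap B(x,r(x)))\leq C(x)\mathcal{H}^{h_{\kappa(x)}}(A)$, and the critical-exponent Adams–Maz'ya logarithmic capacity–Hausdorff-measure inequality (cf.\ \cite[Section 5.1]{AH96}) gives $\mathcal{H}^{h_\kappa}(E)=0$ whenever $\cpct_{n/p,p}(E)=0$ and $\kappa>p-1$, the latter being precisely \eqref{E:tracecondhx}.

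The main obstacle is locating and invoking the correct capacity–Hausdorff-measure inequality in case (ii): the elementary Frostman-style bound that closes case (i) breaks down at the critical Sobolev exponent $\alpha=n/p$ and must be replaced by the sharper Adams-type estimate with a logarithmic gauge function. Once that ingredient is in hand, the remainder of the argument in both cases is a routine packaging of the mass-distribution principle with a countable localization argument along $\Gamma$.
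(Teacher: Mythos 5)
Your proposal is correct, but its packaging differs from the paper's. The paper begins by fixing a \emph{single} exponent $d_0$ (resp.\ $\kappa_0$) with $n-\alpha p<d_0<\inf_\Gamma d$ (resp.\ $p-1<\kappa_0<\inf_\Gamma\kappa$), which is possible by \eqref{E:traceconddx} (resp.\ \eqref{E:tracecondhx}). Because $d_0$ lies strictly below every $d(x)$, condition \eqref{E:upperdensdx} forces $\bar{D}^{d_0}\mu(x)=0$ at \emph{every} $x\in\Gamma$; the Adams--Hedberg comparison then gives $\mathcal{H}^{d_0}(E)=0$ directly from $\cpct_{\alpha,p}(E)=0$, and Mattila's density theorem \cite[Theorem 6.9]{Mattila} (or \cite[Theorem 3.7]{Zaehle}) closes the proof in one global stroke, with no covering argument at all. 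Your version instead keeps the pointwise exponents $d(x_i)$, runs a hands-on mass-distribution estimate in each ball $B(x_i,r(x_i))$ supplied by Remark \ref{R:upregd}(i), and glues the local conclusions together via a Lindel\"of subcover. Both routes are sound and rest on the same two ingredients (a capacity-to-Hausdorff-measure estimate and a Frostman-type density comparison); the uniform-$d_0$ trick is precisely what lets the paper dispense with the covering step and absorb the mass-distribution principle into a single invocation of Mattila's theorem. Two small corrections: the implication you actually need---$\cpct_{\alpha,p}(E)=0$ implies $\mathcal{H}^{d}(E)=0$ when $d>n-\alpha p$, resp.\ $\mathcal{H}^{h_{\kappa}}(E)=0$ when $\kappa>p-1$---is \cite[Theorem 5.1.13 and the Remark following it]{AH96}, not Theorem 5.1.9, which goes the opposite way (finite Hausdorff measure implies zero capacity) and is used elsewhere in the paper; and in your mass-distribution step the passage \enquote{to a ball around a point of $\Gamma$} is unnecessary, since the growth bound \eqref{E:upregdx} is stated for \emph{all} $y\in B(x,r(x))$, so you may simply take $y_j$ to be any point of $U_j\cap B(x,r(x))$.
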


\begin{proof} {For convenience we briefly state the standard arguments. Recall that $p'$ denotes the conjugate of $p$.}
By \eqref{E:traceconddx} there is a number $n-\alpha p<d_0<\inf_{x\in \Gamma}d(x)$, and by {\eqref{E:Dbar}} we have $\bar{D}^{d_0}\mu(x)=0$, $x\in \Gamma$. Similarly, by \eqref{E:tracecondhx} we can find a number $p-1<\kappa_0<\inf_{x\in\Gamma}\kappa(x)$, and {\eqref{E:Dbarkappa}} gives  $\bar{D}^{h_{\kappa_0}}\mu(x)=0$, $x\in \Gamma$. If $\cpct_{\alpha,p}(E)=0$, then by \cite[Theorem 5.1.13 and the Remark following it]{AH96} we have $\mathcal{H}^{d_0}(E)=0$, respectively $\mathcal{H}^{h_{\kappa_0}}(E)=0$, noting that $\int_0^1 h_{\kappa_0}(r)^{p'-1}\frac{dr}{r}<\infty$. {In either case this implies $\mu(E)=0$ by \cite[Theorem 6.9]{Mattila}, respectively \cite[Theorem 3.7]{Zaehle}.}
\end{proof}


It is well known that any set of $(\alpha,p)$-capacity zero is contained in a Borel set of $(\alpha,p)$-capacity zero, see for instance \cite[Proposition 2.3.7]{AH96}. We therefore see that if the hypotheses of Lemma \ref{L:smooth} are satisfied, then for any $u\in H^\alpha_p(\mathbb{R}^n)$ the condition $\widetilde{u}=0$ q.e.\ on $\Gamma$ implies that  $\widetilde{u}=0$ on $\Gamma\setminus N$, where $N$ is a Borel set with $\mu(N)=0$. This gives one implication in \eqref{E:wish}. We will prove the converse implication under the additional assumption \eqref{E:suffconddx}, respectively \eqref{E:suffcondh}. The proof uses some further ideas and will be given in Sections \ref{S:iso} and \ref{S:proof}. Without any additional assumption the validity of this converse implication cannot be expected, as the following counterexample from \cite{FOT2011} shows.

\begin{remark}\label{R:counter}
Recall that a set $E\subset \mathbb{R}^n$ is called \emph{$(\alpha,p)$-quasi open} if for any $\varepsilon>0$ there is an open set $G\supset E$ such that $\cpct_{\alpha,p}(G\setminus E)<\varepsilon$. A set $E\subset \mathbb{R}^n$ is called \emph{$(\alpha,p)$-quasi closed} if its complement is $(\alpha,p)$-quasi open. If the symmetric difference $E_1\Delta E_2$ of two sets $E_1\subset \mathbb{R}^n$ and $E_2\subset \mathbb{R}^n$ has zero $(\alpha,p)$-capacity, then $E_1$ and $E_2$ are said to be \emph{$(\alpha,p)$-equivalent}, cf. \cite[p. 176]{AH96}.

In the special case $p=2$ and $0<\alpha\leq 1$ the scalar product associated with $\|\cdot\|_{H^\alpha_2(\mathbb{R}^n)}$ is a Dirichlet form \cite{FOT2011}. Suppose that we are in this Dirichlet form case and that $\mu$ is a  non-zero Radon measure such that $\mu(E)=0$ for any set of zero $(\alpha,2)$-capacity. A set $\widetilde{\Gamma}\subset \mathbb{R}^n$ is said to be a \emph{quasi support} for $\mu$ if (a) $\widetilde{\Gamma}$ is $(\alpha,2)$-quasi closed and $\mu(\mathbb{R}^n\setminus \widetilde{\Gamma})=0$, and (b) for any other set $\check{\Gamma}\subset \mathbb{R}^n$ satisfying (a) we have $\cpct_{\alpha,2}(\widetilde{\Gamma}\setminus \check{\Gamma})=0$. See \cite[p. 841]{FukushimaLeJan} or \cite[p. 190]{FOT2011}. In \cite[Theorem 3.3.(i)]{FukushimaLeJan} it was proved that any $\mu$ as specified admits a quasi support, see also \cite[Theorem 4.6.3]{FOT2011}. This quasi support is uniquely determined up to $(\alpha,2)$-equivalence. In \cite[Theorem 3.3.(ii)]{FukushimaLeJan} it was shown that for an $(\alpha,2)$-quasi closed set $\widetilde{\Gamma}\subset \mathbb{R}^n$ the property of being a quasi support for $\mu$ is equivalent to the property that for any $u\in H^\alpha_2(\mathbb{R}^n)$ we have $\widetilde{u}=0$ $\mu$-a.e.\ on $\widetilde{\Gamma}$ if and only if $\widetilde{u}=0$ $(\alpha,2)$-q.e.\ on $\widetilde{\Gamma}$. See also \cite[Theorem 4.6.2]{FOT2011}. Since any closed set is $(\alpha,2)$-quasi closed, condition \eqref{E:wish} is equivalent to saying that the support $\Gamma$ of $\mu$ is a quasi support for $\mu$.

An example of a  non-zero Radon measure $\mu$ on $\mathbb{R}^n$, $n\geq 3$, charging no set of zero $(1,2)$-capacity and having support $\Gamma=\mathbb{R}^n$ and a quasi support $\widetilde{\Gamma}\subset \mathbb{R}^n$ such that $\cpct_{1,2}(\Gamma\setminus \widetilde{\Gamma})>0$ was given in \cite[Examples 5.1.2, p. 240]{FOT2011}. Together with the preceding this is a counterexample for \eqref{E:wish} in the case $p=2$ and $\alpha=1$. The measure $\mu$ in \cite[Examples 5.1.2]{FOT2011} has  an $\mathcal{L}^n$-a.e.\ bounded density with respect to $\mathcal{L}^n$ and therefore satisfies {\eqref{E:L}, \eqref{E:fatDbar} and \eqref{E:traceconddx}} with $d\equiv n$. Since $\mu(\Gamma\setminus \widetilde{\Gamma})=0$ we have $\bar{D}^n\mu(x)=0$ at $\mathcal{L}^n$-a.e.\ $x\in \Gamma\setminus \widetilde{\Gamma}$ \cite[Theorem 2.12]{Mattila}. 
The proof that $\Gamma\setminus \widetilde{\Gamma}$ has positive $(1,2)$-capacity proceeds by showing that $\mathcal{L}^n(\Gamma\setminus \widetilde{\Gamma})>0$. Consequently $\cpct_{1,2}(\{x\in \Gamma: \bar{D}^n\mu(x)=0\})>0$, which violates \eqref{E:suffconddx}.
\end{remark}

\section{Kernels of trace operators}\label{S:kernels}

{The results in this section hold for all $n\geq 1$.} Let $1<p<\infty$ and $\alpha>0$. We write
\[m_s:=\lceil\alpha\rceil - 1,\]
where $\lceil\cdot \rceil$ denotes the ceiling function. The number $m_s\geq 0$ (\enquote{s} for \enquote{Sobolev}) is the maximal order $|\beta|$ of multiindices $\beta$ such that $\alpha-|\beta|>0$. If $u\in H^\alpha_p(\mathbb{R}^n)$ and $|\beta|\leq m_s$, then the distributional derivative $D^\beta u$ of $u$ is an element of $H^{\alpha-|\beta|}_p(\mathbb{R}^n)$ {and} we can find 
a $(\alpha-|\beta|,p)$-quasi  continuous representative $(D^\beta u)^\sim$ of $D^\beta u$.

Let $\Gamma\subset \mathbb{R}^n$ and $|\beta|\leq m_s$. {We write $(D^\beta u)^\sim|_\Gamma=0$ whenever $(D^\beta u)^\sim=0$ $(\alpha-|\beta|,p)$-q.e.\ on $\Gamma$, noting that the latter is immediate if $\cpct_{\alpha-|\beta|,p}(\Gamma)=0$.} By $\widetilde{H}^\alpha_p(\mathbb{R}^n\setminus \Gamma)$ we denote the closure of $C_c^\infty(\mathbb{R}^n\setminus \Gamma)$ in $H^\alpha_p(\mathbb{R}^n)$. A well known theorem by Netrusov states that {for all $1<p<\infty$ and $\alpha>0$ we have}
\begin{equation}\label{E:traceqe}
\widetilde{H}^\alpha_p(\mathbb{R}^n\setminus \Gamma)=\left\lbrace u\in H^\alpha_p(\mathbb{R}^n):\ ((D^\beta u)^\sim|_\Gamma)_{|\beta|\leq m_s}=0\right\rbrace,
\end{equation}
see \cite[Corollary 10.1.2]{AH96} and \cite{HedbergNetrusov07, Netrusov93}. Recall \eqref{E:capdim}. In the special case that $0<\alpha< \alpha_p(\Gamma)+1$ or $\alpha= \alpha_p(\Gamma)+1$ and $\cpct_{\alpha_p(\Gamma),p}(\Gamma)=0$ identity \eqref{E:traceqe} is equivalent to \eqref{E:kerTr}, which stated that $\ker\mathrm{Tr}_{\Gamma,0}= \widetilde{H}^\alpha_p(\mathbb{R}^n\setminus \Gamma)$, where $\mathrm{Tr}_{\Gamma,0}$ is the operator considered in \eqref{E:traceopqe}. Earlier variants of \eqref{E:traceqe} in this case were shown in \cite{Bagby72, Deny50, Havin68, Hedberg72, Sobolev63}, and earlier variants  of \eqref{E:traceqe} for \enquote{regular} $\Gamma$ can for instance be found in \cite{Necas67, Triebel78}.

Let 
\[m_c:=\left\lceil\alpha-\frac{n}{p}\right\rceil - 1.\]
If $m_c\geq 0$, then $m_c$ (\enquote{c} for \enquote{continuous}) indicates the the maximal order $|\beta|$ up to which the $D^\beta u$ are continuous. We have $m_c\geq 0$ if and only if $\alpha>\frac{n}{p}$. If $m_c<0$, then neither $u$ nor any of its derivatives are continuous. Note that $m_c<m_s$ if and only if the interval $[\alpha-\frac{n}{p},\alpha)$ contains an integer; in this case it contains the integers $m_c+1,\ldots,m_s$. 

Now suppose that $\Gamma$ is the support of a non-zero Radon measure $\mu$ and $d:\Gamma\to [0,\infty)$ is a such that {\eqref{E:Dbar} and \eqref{E:traceconddx}} hold. Let 
\begin{equation}\label{E:mt}
m_t:=\left\lceil\alpha - \frac{(n-\inf_{x\in\Gamma} d(x))}{p}\right\rceil - 1.
\end{equation}
The number $m_t\geq 0$ (\enquote{t} for \enquote{trace}) indicates the maximal order $|\beta|$ of multiindices $\beta$ such that 
\[n-(\alpha-|\beta|)p<\inf_{x\in\Gamma} d(x).\]
For such $\beta$, {i.e. if $|\beta|\leq m_t$,} Lemma \ref{L:smooth} guarantees that all sets of zero $(\alpha-|\beta|,p)$-capacity are $\mu$-null sets.
In this case the $\mu$-equivalence class of $(D^\beta u)^\sim|_\Gamma$ is well defined, and we denote it by $[D^\beta u]_\mu$.

Recall from Remark \ref{R:upregd} (i) that $d(x)\leq n$, $x\in\Gamma$. We have that
\[ m_s - n \leq m_c \leq m_t \leq m_s. \]

For any $0\leq m\leq m_t$ we can define a linear operator $[\mathrm{Tr}_{\Gamma,m}]_\mu:H^\alpha_p(\mathbb{R}^n)\to \bigotimes_{|\beta|\leq m} L^0(\Gamma,\mu)$ by
\[[\mathrm{Tr}_{\Gamma,m}]_\mu u:=([D^\beta u]_\mu)_{|\beta|\leq m},\quad u\in H^\alpha_p(\mathbb{R}^n);\]
here $L^0(\Gamma,\mu)$ denotes the space of $\mu$-equivalence classes of Borel functions on $\Gamma$. In the special case $m=0$ we recover the operator $[\mathrm{Tr}_{\Gamma,0}]_\mu$ considered in \eqref{E:traceopmu}. The kernel of $[\mathrm{Tr}_{\Gamma,m}]_\mu$ is 
\[\ker [\mathrm{Tr}_{\Gamma,m}]_\mu=\left\lbrace u\in H^\alpha_p(\mathbb{R}^n):\ [\mathrm{Tr}_{\Gamma,m}]_\mu u=0\right\rbrace; \]
here $[\mathrm{Tr}_{\Gamma,m}]_\mu u=0$ is understood in the natural sense that $[D^\beta u]_\mu=0$ $\mu$-a.e.\ on $\Gamma$ for all $|\beta|\leq m$. 

{If the assumptions stated before formula \eqref{E:mt} hold and $0\leq m\leq m_t$,} the identity $((D^\beta u)^\sim|_\Gamma)_{|\beta|\leq m}=0$ implies the identity $[\mathrm{Tr}_{\Gamma,m}]_\mu u=0$, but in general the converse implication is not guaranteed. 

The next result applies Theorem \ref{T:main} to obtain sufficient conditions for this converse implication, and hence a characterization of 
$\ker [\mathrm{Tr}_{\Gamma,m_t}]_\mu$.  
We note that a simplified statement for the case $m_t=0$ is provided in Corollary \ref{C:variant} below.

\begin{corollary}\label{C:trace}
Let $\mu$ be a  non-zero Radon measure on $\mathbb{R}^n$, $\Gamma=\supp\mu$,  $1<p<\infty$ and $\alpha>0$. Let $d:\Gamma\to [0,\infty)$ be a Borel function such that {\eqref{E:L}, \eqref{E:fatDbar} and \eqref{E:traceconddx}} hold. 
{If $m_c<m_t$, suppose additionally that} 
\begin{equation}\label{E:densitypos}
\bar{D}^{d(x)}\mu(x)>0\quad \text{for  $\ (\alpha-\max(m_c+1,0),p)$-q.e.\ $x\in \Gamma$.}
\end{equation}
{Then the following hold:}
\begin{enumerate}
\item[(i)] 
For any $u\in H^\alpha_p(\mathbb{R}^n)$, 
{and any $0\leq m\leq m_t$,} 
\begin{equation}\label{E:muident}
[\mathrm{Tr}_{\Gamma,m}]_\mu u=0\quad \text{if and only if}\quad ((D^\beta u)^\sim|_\Gamma)_{|\beta|\leq m}=0.
\end{equation}
\item[(ii)] 
If $m_t<m_s$, suppose additionally that 
\begin{equation}
\label{E:capzero}
\cpct_{\alpha-(m_t+1),p}(\Gamma)=0.
\end{equation}
%
Then 
\begin{equation}\label{E:kerident}
\ker [\mathrm{Tr}_{\Gamma,m_t}]_\mu=\widetilde{H}^\alpha_p(\mathbb{R}^n\setminus \Gamma).
\end{equation}
\end{enumerate}
\end{corollary}

\begin{proof}
For every multiindex $\beta$ with $|\beta|\leq m_c$ the distributional derivative $D^\beta u$ has a unique continuous representative, so that the equivalence
\begin{align}
\label{E:deriv_equiv}
[D^\beta u]_\mu u=0\quad \text{if and only if} \quad ((D^\beta u)^\sim|_\Gamma)=0
\end{align}
follows similarly as Remark \ref{R:trivial}. This gives \eqref{E:muident} in the case $0\leq m\leq m_c$. If $\max(m_c+1,0)\leq m\leq m_t$, then for any $\beta$ with $\max(m_c+1,0)\leq |\beta|\leq m$ we have \eqref{E:deriv_equiv} by Theorem \ref{T:main} and since \eqref{E:densitypos} implies that $\bar{D}^{d(x)}\mu(x)>0$ for $(\alpha-|\beta|,p)$-q.e.\ $x\in \Gamma$. Here we use that a set of zero $(\sigma,p)$-capacity has zero $(\sigma',p)$-capacity for any $0<\sigma'\leq \sigma$. Together, this gives (i), and 
{by \eqref{E:traceqe}}
it gives \eqref{E:kerident} in the case that $m_t=m_s$. If $m_t\leq m_s-1$ then \eqref{E:deriv_equiv} holds for $m_t+1\leq |\beta|\leq m_s$ by the assumption that $\cpct_{\alpha-(m_t+1),p}(\Gamma)=0$, which, arguing as before, implies that $\cpct_{\alpha-|\beta|,p}(\Gamma)=0$ for all the $\beta$ in this range. {By \eqref{E:traceqe} this gives \eqref{E:kerident}.}
\end{proof}

In the special case of $d$-measures $\mu$ and resulting $d$-sets $\Gamma$ we obtain the following.
 
\begin{corollary}\label{C:dsettrace} 
Let $0<d\leq n$, $\mu$ be a $d$-measure on $\mathbb{R}^n$, $\Gamma=\supp\mu$ and $1<p<\infty$. Then \eqref{E:kerident} holds
for all $\alpha>\frac{n-d}{p}$.
\end{corollary}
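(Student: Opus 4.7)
The plan is to apply Corollary \ref{C:trace}(ii) to the constant function $d(\cdot)\equiv d$ on $\Gamma$, so the task reduces to verifying its hypotheses. Conditions \eqref{E:traceconddx} and \eqref{E:upperdensdx} are immediate: the former amounts to the assumed strict inequality $\alpha>(n-d)/p$, while the upper bound in \eqref{E:dset} gives $\bar{\mathbb{D}}^d\mu(x)\leq c$ for every $x\in\Gamma$, as already observed in Corollary \ref{C:dset}(i). The lower bound in \eqref{E:dset} yields $\bar{D}^d\mu(x)\geq c^{-1}>0$ for every $x\in\Gamma$, so $S_{d,\mu}(\Gamma)=\emptyset$ and the strict positivity condition \eqref{E:densitypos} in Corollary \ref{C:trace}(ii)(a) is satisfied trivially.

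It remains to verify condition (b), that is, $m_t=m_s$ or $\cpct_{\alpha-(m_t+1),p}(\Gamma)=0$. The case $d=n$ is trivial, since $(n-d)/p=0$ forces $m_t$ and $m_s$ to be defined by the same inequality, whence $m_t=m_s$. I therefore assume $0<d<n$ and $m_t<m_s$. Writing $\alpha-(n-d)/p=k+\theta$ with $k\in\N$ and $0\leq\theta<1$, the definition of $m_t$ gives $m_t=k$ if $\theta>0$ and $m_t=k-1$ if $\theta=0$, so that $\alpha-(m_t+1)\leq (n-d)/p$, with equality precisely when $\theta=0$.

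When $\theta>0$, i.e.\ $\alpha-(m_t+1)<(n-d)/p$, the vanishing of $\cpct_{\alpha-(m_t+1),p}(\Gamma)$ follows from the Adams--Hedberg integral criterion \cite[Theorem 5.1.13]{AH96} applied with Hausdorff function $h(r)=r^d$, exactly as in the proof of Lemma \ref{L:smooth}, using that $\mathcal{H}^d|_\Gamma$ is locally finite by \eqref{E:dset} and that $d>n-(\alpha-(m_t+1))p$ at this step.

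The main obstacle is the borderline case $\theta=0$, where $\alpha-(m_t+1)=(n-d)/p=\alpha_p(\Gamma)$ and one needs the critical Bessel capacity $\cpct_{\alpha_p(\Gamma),p}(\Gamma)$ to vanish. I would invoke the well-known fact that this critical capacity is indeed zero for any $d$-set with $0<d<n$: for $p=2$ this is the fact already cited in Remark \ref{R:main2} via \cite[Corollary 18.12]{Triebel97} and \cite[Proposition 4.3]{Caetano2002}, and for general $p$ one appeals to the logarithmic refinement of \cite[Theorem 5.1.13]{AH96}, noting that at $\alpha p=n-d$ the relevant integral $\int_0^1 (h(r)/r^{n-\alpha p})^{1/(p-1)}\,dr/r$ degenerates to $\int_0^1 dr/r=\infty$. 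With (a) and (b) both verified, Corollary \ref{C:trace}(ii) immediately yields \eqref{E:kerident}.
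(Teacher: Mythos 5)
Your reduction to Corollary~\ref{C:trace}(ii) with the constant function $d(\cdot)\equiv d$ is exactly the paper's approach, and your verification of \eqref{E:traceconddx}, \eqref{E:upperdensdx}, $S_{d,\mu}(\Gamma)=\emptyset$, \eqref{E:densitypos} and the case $d=n$ is all fine. However, the technical heart of the argument --- the verification of hypothesis (b), i.e.\ $\cpct_{\alpha-(m_t+1),p}(\Gamma)=0$ when $0<d<n$ and $m_t<m_s$ --- contains genuine errors.

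First, the inequality is backwards. Writing $\sigma:=\alpha-(m_t+1)$, you correctly deduce $\sigma\leq \tfrac{n-d}{p}$, i.e.\ $d\leq n-\sigma p$ with equality iff $\theta=0$. But you then assert ``$d>n-(\alpha-(m_t+1))p$'' as an input to the argument; this is the opposite inequality and is false in the regime you are considering. Second, you cite \cite[Theorem 5.1.13]{AH96}, which (as used in the proof of Lemma~\ref{L:smooth}) gives the implication ``$\cpct_{\sigma,p}(E)=0 \Rightarrow \mathcal{H}^h(E)=0$'' under a convergent-integral hypothesis. That is the wrong direction for what you need here: you must pass from a Hausdorff-measure bound on $\Gamma$ to the vanishing of a Bessel capacity. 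The relevant tool is \cite[Theorem 5.1.9]{AH96}, which is what the paper actually invokes. Concretely: if $\theta>0$ one has $d<n-\sigma p$, so local finiteness of $\mathcal{H}^d|_\Gamma$ forces $\mathcal{H}^{n-\sigma p}(B\cap\Gamma)=0$ for every ball $B$, hence $\mathcal{H}^{n-\sigma p}(\Gamma)=0$ after exhaustion, and then \cite[Theorem 5.1.9]{AH96} yields $\cpct_{\sigma,p}(\Gamma)=0$. Third, in the borderline case $\theta=0$ your argument (``the relevant integral diverges'') is not a valid deduction of $\cpct_{\sigma,p}(\Gamma)=0$; divergence of that integral merely tells you that Theorem 5.1.13 cannot be applied at the critical index, which says nothing about whether the capacity is zero. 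The paper's proof instead uses that $\mathcal{H}^{n-\sigma p}(B\cap\Gamma)=\mathcal{H}^d(B\cap\Gamma)<\infty$ for every ball, applies \cite[Theorem 5.1.9]{AH96} locally, and then deduces the global statement by exhausting $\mathbb{R}^n$ with balls and invoking \cite[Proposition 2.3.12]{AH96}. (Also, the $p=2$ citation via Remark~\ref{R:main2} concerns \eqref{E:wish} at the critical smoothness, not vanishing of $\cpct_{\alpha_p(\Gamma),p}(\Gamma)$, so it does not deliver what you claim either.) So while you identified the right structure, the capacity-vanishing step needs to be redone along the lines just described.
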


\begin{proof}
In the case $m_t=m_s$ all hypotheses of Corollary \ref{C:trace} are obviously satisfied. If instead $m_t\leq m_s-1$, then we can use that, by the definition \eqref{E:mt} of $m_t$,
\begin{equation}\label{E:alphaleq}
\alpha\leq \frac{n-d}{p}+m_t+1.
\end{equation}
We claim that \eqref{E:alphaleq} implies
\begin{equation}\label{E:capzerod}
\cpct_{\alpha-(m_t+1),p}(\Gamma)=0,
\end{equation}
so that again all hypotheses of Corollary \ref{C:trace} (ii) hold. To see \eqref{E:capzerod}, assume first that the inequality in \eqref{E:alphaleq} is strict. Then $d<n-(\alpha-m_t-1)p$, and we can recycle an argument from the proof of \cite[Proposition 6.7]{CHM21}:  For any open ball $B$ we have $\mathcal{H}^d(B\cap \Gamma)<\infty$ and therefore $\mathcal{H}^{n-(\alpha-m_t-1)p}(B\cap \Gamma)=0$. Exhausting $\mathbb{R}^n$ by an increasing sequence of open balls and using the continuity of measures from below gives $\mathcal{H}^{n-(\alpha-m_t-1)p}(\Gamma)=0$, so that an application of \cite[Theorem 5.1.9]{AH96} proves \eqref{E:capzerod}. See \cite[p. 35/36]{CHM21}. Now assume that equality holds in \eqref{E:alphaleq}. Then $\mathcal{H}^{n-(\alpha-m_t-1)p}(B\cap \Gamma)=\mathcal{H}^d(B\cap \Gamma)<\infty$ for any open ball $B$. Therefore, using first \cite[Theorem 5.1.9]{AH96} and then exhausting $\mathbb{R}^n$ by open balls and applying \cite[Proposition 2.3.12]{AH96}, we can conclude \eqref{E:capzerod}. 
\end{proof}


\begin{remark}\label{R:improve} In \cite[Proposition 6.7]{CHM21} the equality \eqref{E:kerident} was shown for $d$-sets under the assumptions that $0<d<n$, $\alpha>\frac{n-d}{p}$ and $\alpha-\frac{n-d}{p}$ is not an integer, along with a similar result for Besov spaces. The exclusion of integer $\alpha-\frac{n-d}{p}$ was needed to apply an extension theorem \cite[Theorem VII.3]{JW84}. Note that the non-integer case corresponds to having $\frac{n-d}{p}+m_t<\alpha<\frac{n-d}{p}+m_t+1$ and the integer case to $\alpha=\frac{n-d}{p}+m_t+1$. Corollary \ref{C:dsettrace} is thus an extension of \cite[Proposition 6.7]{CHM21} in the case of fractional Sobolev spaces. Indeed, the question whether \eqref{E:kerident} holds for the integer case was formulated as an open problem in \cite[Remark 6.9]{CHM21}, and for the case of fractional Sobolev spaces Corollary \ref{C:dsettrace} gives an affirmative answer. For $p=2$ and $\frac{n-d}{2}<\alpha<\frac{n-d}{2}+1$ a slightly improved version of \cite[Proposition 6.7]{CHM21} was presented in \cite[Theorem 3.9]{CCh-WCGHM25} {(the main improvement being the inclusion of the case $d=n$).}  For $d=n$ and smoothness $\alpha=1$ the coincidence
\begin{equation}\label{E:kerident0}
\ker [\mathrm{Tr}_{\Gamma,0}]_\mu=\widetilde{H}^1_2(\mathbb{R}^n\setminus \Gamma)
\end{equation}
was stated as \cite[Assumption 3.12 (ii)]{CCh-WCGHM25}. For $d=n$ and $0<\alpha\leq 1$ we have $m_t=m_s=0$, so that Corollary \ref{C:dsettrace} gives \eqref{E:kerident0}. This means that \cite[Assumption 3.12 (ii)]{CCh-WCGHM25} is not needed for the validity of Corollary 3.13 and Lemma 3.15 in \cite{CCh-WCGHM25}.
\end{remark}

Recall that in \eqref{E:capdim} the number $\alpha_p(\Gamma)$ was defined to be $\alpha_p(\Gamma)=\frac{1}{p}(n-\dim_H\Gamma)$. {We provide some more detailed comments on the significance of $\alpha_p(\Gamma)$.} 

\begin{remark}\label{R:capdim} \mbox{}
\begin{enumerate}
\item[(i)] By a well-known characterization of the Hausdorff dimension in terms of Riesz capacities \cite[Theorem 8.9]{Mattila}, together with the fact that, for $\alpha>\frac{n}{p}$, points have positive $(\alpha,p)$-capacity, we have
\[\alpha_p(\Gamma)=\sup\{\alpha\geq 0:\ \cpct_{\alpha,p}(\Gamma)=0\}=\inf\{\alpha\geq 0:\ \cpct_{\alpha,p}(\Gamma)>0\},\]
for any nonempty Borel set $\Gamma\subset \mathbb{R}^n$. If $\cpct_{\alpha,p}(\Gamma)=0$, which happens in particular if $0<\alpha<\alpha_p(\Gamma)$, then, where $\widetilde{B}(\Gamma)$ is as defined in {the introduction}, $\widetilde{B}(\Gamma)=\{0\}$ and $\ker\mathrm{Tr}_{\Gamma,0}=H_p^\alpha(\mathbb{R}^n)={\widetilde{H}_p^\alpha(\mathbb{R}^n\setminus\Gamma)}$. In this case $\Gamma$ is said to be a removable set for $H_p^\alpha(\mathbb{R}^n)$, see for instance \cite[Section 2.9, p. 51]{AH96} or \cite{HKM17,HMS23} and the references cited there. If $\cpct_{\alpha,p}(\Gamma)>0$, which happens in particular if {$\alpha_p(\Gamma)<\alpha$}, then $\widetilde{H}_p^\alpha(\mathbb{R}^n\setminus \Gamma)\subset \ker\mathrm{Tr}_{\Gamma,0}\subsetneq H_p^\alpha(\mathbb{R}^n)$. {If $\alpha_p(\Gamma)<\alpha<\alpha_p(\Gamma)+1$ or $\alpha=\alpha_p(\Gamma)+1$ and $\cpct_{\alpha_p(\Gamma),p}(\Gamma)=0$,} then by \cite[Corollary 10.1.2]{AH96} also $\ker\mathrm{Tr}_{\Gamma,0}\subset \widetilde{H}_p^\alpha(\mathbb{R}^n\setminus \Gamma)$.
\item[(ii)] Let $\mu$ be a non-zero Radon measure and $\Gamma=\supp\mu$. Then $\mu(B(x,r)\cap \Gamma)>0$ for all $x\in \Gamma$ and all $r>0$. Let $d:\Gamma\to [0,\infty)$ be a Borel function satisfying {\eqref{E:Dbar}}. For $\alpha_p(\Gamma)<\alpha\leq \frac{1}{p}(n-\inf_{x\in\Gamma}d(x))$ there could be some $x\in \Gamma$ and $r>0$ such that $\cpct_{\alpha,p}(B(x,r)\cap \Gamma)=0$, see Example \ref{Ex:hybridcont} below. This is excluded by assumption \eqref{E:traceconddx}, which implies that $\cpct_{\alpha,p}(B(x,r)\cap \Gamma)>0$ for all $x\in \Gamma$ and all $r>0$ by Lemma \ref{L:smooth} (i). {If \eqref{E:L} and \eqref{E:fatDbar} hold and $S_{d,\mu}(\Gamma)=\emptyset$, then \eqref{E:dimH} gives
$\alpha_p(\Gamma)=\frac{1}{p}(n-\sup_{x\in\Gamma}d(x))$.}
\end{enumerate}
\end{remark}

We point out that in the case $m_t=0$ in Corollary \ref{C:trace} (ii) the following conditions are sufficient for the validity of \eqref{E:kerTrmu} for $\alpha$ from a certain and, in comparison with \eqref{E:Netrusov0} and \eqref{E:kerTr},
restricted range.


\begin{corollary}\label{C:variant}
Let $\mu$ be a  non-zero Radon measure on $\mathbb{R}^n$, $\Gamma=\supp\mu$ and  $1<p<\infty$. Suppose that $d:\Gamma\to[0,\infty)$ is a Borel function such that \eqref{E:L} and \eqref{E:fatDbar} hold {and 
\begin{equation}\label{E:variant}
\frac{n-\inf_{x\in\Gamma}d(x)}{p}<\alpha<\alpha_p(\Gamma)+1.
\end{equation}
\begin{enumerate}
\item[(i)] If $\alpha\leq \frac{n}{p}$ also assume that $\cpct_{\alpha,p}(S_{d,\mu}(\Gamma))=0$. Then \eqref{E:kerTrmu} holds. This remains true if $\alpha=\alpha_p(\Gamma)+1$ and $\cpct_{\alpha_p(\Gamma),p}(\Gamma)=0$. 
\item[(ii)] If $\cpct_{\min(\frac{n}{p},\alpha_p(\Gamma)+1),p}(\Gamma)=0$, then \eqref{E:kerTrmu} holds for all $\alpha$ satisfying \eqref{E:variant} and for $\alpha=\alpha_p(\Gamma)+1$.
\end{enumerate}}
\end{corollary}


\begin{proof} {
The left inequality in \eqref{E:variant} implies that $m_t=0$ and, as a consequence, $m_c\leq 0$. For $m_c=0$ the first condition in Corollary \ref{C:trace} (ii) (a) holds. By the definition \eqref{E:capdim} of $\alpha_p(\Gamma)$ we have $m_c<0$ if and only if $\alpha\leq \frac{n}{p}$, and for this case $\cpct_{\alpha,p}(S_{d,\mu}(\Gamma))=0$ is assumed. This gives the conditions in Corollary \ref{C:trace} (ii) (a).  By {Remark \ref{R:upregd} (i) and the right inequality in }\eqref{E:variant}
we have 
\begin{equation}\label{E:supsmall}
\sup_{x\in\Gamma}d(x)<n-(\alpha-1)p,
\end{equation} 
which implies that $\bar{D}^{n-(\alpha-1)p}\mu(x)=\infty$ for all $x\in\Gamma$ and, by \cite[Theorem 6.9]{Mattila}, $\mathcal{H}^{n-(\alpha-1)p}(B\cap \Gamma)=0$ for any ball $B$. We can again use \cite[Theorem 5.1.9]{AH96}, exhaustion by balls and \cite[Proposition 2.3.12]{AH96} to obtain $\cpct_{\alpha-1,p}(\Gamma)=0$, which is the second condition in Corollary \ref{C:trace} (ii) (b). Corollary \ref{C:trace} (ii) now gives \eqref{E:kerTrmu}, which proves (i). Statement (ii) is a straightforward consequence of (i).}
\end{proof}

\begin{examples}\label{Ex:hybridcont}
In Example \ref{Ex:hybrid} we have $\alpha_2(\Gamma)=1-\frac{\log 2}{\log 3}$. For $0<\alpha\leq \alpha_2(\Gamma)$ the set $\Gamma$ has zero $(\alpha,2)$-capacity, and for $\alpha_2(\Gamma)<\alpha\leq \frac12$ the set $\Gamma_2\setminus \Gamma_1$ has zero $(\alpha,2)$-capacity. If $\frac12<\alpha\leq 2-\frac{\log 2}{\log 3}$, then $B(x,r)\cap \Gamma$, for $x\in \Gamma$ and $r>0$, has positive $(\alpha,2)$-capacity and \eqref{E:kerTrmu} holds with $p=2$.
\end{examples}

Let the hypotheses of Lemma \ref{L:smooth} (i) or (ii) be satisfied. Recall from the introduction that $\widetilde{B}(\Gamma)$ denotes the space of $(\alpha,p)$-q.e.\ equivalence classes of restrictions $\widetilde{u}|_\Gamma$, $u\in H^\alpha_p(\mathbb{R}^n)$, and that it is a Banach space (Hilbert for $p=2$) if endowed with the (quotient) norm 
\begin{equation}\label{E:capnorm}
\|\varphi\|_{\widetilde{B}(\Gamma)}:=\inf\big\lbrace \|u\|_{H_p^\alpha(\mathbb{R}^n)}:\ u\in H_p^\alpha(\mathbb{R}^n),\ \mathrm{Tr}_{\Gamma,0} u=\varphi\big\rbrace,\quad \varphi\in \widetilde{B}(\Gamma).
\end{equation}
Recall also that {$[\widetilde{B}(\Gamma)]_\mu\subset L^0(\Gamma,\mu)$} denotes the space of $\mu$-equivalence classes of restrictions $\widetilde{u}|_\Gamma$, $u\in H^\alpha_p(\mathbb{R}^n)$, which is a Banach space (Hilbert for $p=2$) if endowed with the (quotient) norm 
\begin{equation}\label{E:munorm}
\|\varphi\|_{[\widetilde{B}(\Gamma)]_\mu}:=\inf\big\lbrace \|u\|_{H_p^\alpha(\mathbb{R}^n)}:\ u\in H_p^\alpha(\mathbb{R}^n),\ [\mathrm{Tr}_{\Gamma,0}]_\mu u=\varphi\big\rbrace, \quad \quad \varphi\in [\widetilde{B}(\Gamma)]_\mu.
\end{equation}

The following lemma is immediate from the above.
\begin{lemma}\label{L:allequiv}
Let the hypotheses of Lemma \ref{L:smooth} (i) or (ii) be satisfied. Let $\iota:\widetilde{B}(\Gamma)\to [\widetilde{B}(\Gamma)]_\mu$ denote the canonical linear surjection mapping an element of $\widetilde{B}(\Gamma)$ to its equivalence class in $[\widetilde{B}(\Gamma)]_\mu$. Then the following are equivalent: 
\begin{enumerate}
\item[(i)] the equivalence \eqref{E:wish} holds,  
\item[(ii)] {the map $\iota$ is injective,}
\item[(iii)]{the map $\iota$ is an isometric isomorphism. }
\end{enumerate}
\end{lemma}

{
Under slightly more restrictive assumptions, cf. Remarks \ref{R:basicimplication} and \ref{R:basicimplicationkappa}, we can observe the following.
\begin{corollary}
Let $\mu$ be a  non-zero Radon measure on $\mathbb{R}^n$, $\Gamma=\supp\mu$ and $1<p<\infty$. If
\begin{enumerate}
\item[(i)] $0<\alpha\leq \frac{n}{p}$ and $d:\Gamma\to [0,\infty)$ is a Borel function such that {\eqref{E:L}, \eqref{E:fatDbar},  \eqref{E:traceconddx} and \eqref{E:suffconddx} are satisfied or}
\item[(ii)] $\alpha=\frac{n}{p}$ and $\kappa:\Gamma\to  [0,\infty)$ is a Borel function such that {\eqref{E:Lkappa}, \eqref{E:fatDbarkappa}, \eqref{E:tracecondhx} and \eqref{E:suffcondh} are satisfied,} 
\end{enumerate}
{then (i), (ii) and (iii) in Lemma \ref{L:allequiv} hold}.
\end{corollary}
}

\begin{remark}
The first-named author would like to correct a related imprecise claim made in \cite{HR-PT2023}: In \cite[formula (5) of Corollary 2.1]{HR-PT2023} the specification \enquote{$\mu$-a.e.} must be replaced by \enquote{q.e.} and \cite[Proposition 2.1]{HR-PT2023} should be disregarded. With these corrections all other results remain valid.
\end{remark}

\section{Applications to domains}\label{S:domains}

We discuss applications of our results to domains, i.e., non-empty open sets $\Omega\subset \mathbb{R}^n$; {our results in this section hold for all $n\geq 1$ except where indicated otherwise.}

{Our first result is a straightforward application of Theorem \ref{T:main}, in a case where \eqref{E:suffconddx} holds with $S_{d,\mu}(\Gamma)$ not necessarily empty.}

{\begin{theorem} \label{T:domain0}
Suppose $1<p<\infty$, $\alpha>0$, and $\Omega \subset \R^n$ is a domain such that $\mathcal{L}^n(\partial \Omega)=\mathrm{Cap}_{\alpha,p}(S) = 0$, where 
\begin{equation} \label{E:S}
S := \big\lbrace x\in \partial \Omega: \limsup_{r\to 0} \frac{\mathcal{L}^n(B(x,r)\cap \Omega)}{r^n}=0\big\rbrace. 
\end{equation}
Then 
\begin{equation}\label{E:domain}
\text{$u=0\ $ $\mathcal{L}^n$-a.e.\ on $\Omega$}\quad\text{if and only if}\quad\text{$\widetilde{u}=0$ $(\alpha,p)$-q.e.\ on $\overline{\Omega}$,} \qquad u\in H^\alpha_p(\R^n),
\end{equation}
where $\widetilde{u}$ denotes an arbitrary $(\alpha,p)$-quasi continuous representative of $u$.
\end{theorem}
\begin{proof}
Let $\Gamma := \overline{\Omega}$, $\mu := \mathcal{L}^n|_\Gamma$ and $d(x):= n$, $x\in \Gamma$.  Then, if $\alpha p \leq n$, $\Gamma$, $\mu$ and $d$ satisfy the conditions of Theorem \ref{T:main} with $S_{d,\mu}(\Gamma)=S$, and \eqref{E:domain} follows from Theorem \ref{T:main}, since $\mathcal{L}^n(\partial \Omega)=\mathrm{Cap}_{\alpha,p}(S) = 0$. If $\alpha p>n$ then \eqref{E:domain} follows from Remark \ref{R:trivial}.   
\end{proof}
}

{If $\alpha p\leq n$ there exist non-empty sets $S\subset \partial \Omega$ with $\mathrm{Cap}_{\alpha,p}(S) = 0$. In particular $\mathrm{Cap}_{\alpha,p}(S) = 0$ if $S$ is countable with only finitely many limit points, as a consequence of \cite[Theorems 5.1.2-5.1.4]{AH96}. This choice for $S$ allows, for example, $\partial \Omega$ to have infinitely many outward cusps. }

{ 
Recall from \cite{Hajlaszetal08} that a domain $\Omega\subset \mathbb{R}^n$ satisfies the \textit{{measure density condition}} if there is some $c>0$ such that, for all $x\in \Omega$ and $0<r\leq 1$, 
\begin{equation}\label{E:measuredenscond}
\mathcal{L}^n(B(x,r)\cap \Omega)\geq c\:r^n.
\end{equation}

\begin{remark}\label{R:measuredenscond}
Domains satisfying the measure density condition \eqref{E:measuredenscond}
are called ``$n$-thick domains'' in \cite{Rychkov00}, ``interior regular domains'' in \cite{CHM21}, ``open $n$-sets'' in \cite{CCh-WH25}, and simply \enquote{$n$-sets} in \cite[p. 205]{JW84} or \cite{FarkasJacob2001}. (We note that in our definition \eqref{E:dset} we require an $n$-set to be closed.) 
\end{remark}}

{Whether or not $\alpha p\leq n$, we have the following corollary of Theorem \ref{T:domain0} which is,} in fact, a variant of the $n$-set case in {Corollary \ref{C:dset0}.}

\begin{corollary}\label{C:domain}
Let $\Omega\subset \mathbb{R}^n$ be a domain satisfying the measure density condition. Then $\overline{\Omega}$ is an $n$-set, {$\mathcal{L}^n(\partial\Omega)=0$ and $S=\emptyset$, where $S$ is given by \eqref{E:S}. Moreover,} for any $1<p<\infty$ and $\alpha>0$, {the equivalence \eqref{E:domain} holds,}
where $\widetilde{u}$ denotes an arbitrary $(\alpha,p)$-quasi continuous representative of $u$.
\end{corollary}

\begin{proof} {The measure density condition \eqref{E:measuredenscond} implies that $\overline{\Omega}$ is an $n$-set and $\mathcal{L}^n(\partial\Omega)=0$, see \cite[Proposition 1, p. 205]{JW84} for a proof. In particular, \eqref{E:measuredenscond} holds, for some $c>0$, for all $x\in \overline{\Omega}$, so that $S=\emptyset$. Thus \eqref{E:domain} follows from Theorem \ref{T:domain0}.}
\end{proof}

Given a domain $\Omega\subset \mathbb{R}^n$, $1\leq p<\infty$ and a positive integer $m$, we write $W^m_p(\Omega)$ for the Sobolev space of all $u\in L^q(\Omega)$ with $D^\beta u\in L^q(\Omega)$, $|\beta|\leq m$, endowed with the intrinsic norm $u\mapsto \sum_{|\beta|\leq m}\|D^\beta u\|_{L^q(\Omega)}$. We say that a domain $\Omega$ has the $W^m_p$-extension property if there is a bounded linear extension operator $\mathrm{E}_\Omega:W_p^m(\Omega)\to W^m_p(\R^n)$ with $(\mathrm{E}_\Omega u)|_\Omega=u$, $u\in W^m_p(\Omega)$. For $p=2$ and arbitrary positive integer $m$ every domain $\Omega$ for which the restriction $u\mapsto u|_\Omega$ is surjective from $W^m_2(\mathbb{R}^n)$ onto $W_2^m(\Omega)$ has the $W^m_2$-extension property, cf. \cite[p. 1221]{Hajlaszetal08}.

For $1<p<\infty$ and $\alpha>0$ we define $H_p^\alpha(\Omega)$ as the space of all $u\in L^p(\Omega)$ such that there is some $v\in H^\alpha_p(\mathbb{R}^n)$ with $u=v|_\Omega$, endowed with the quotient norm. We say that $\Omega$ has the $H^\alpha_p$-extension property if there is a bounded linear extension operator $\mathrm{E}_\Omega:H^\alpha_p(\Omega)\to H^\alpha_p(\R^n)$ with $(\mathrm{E}_\Omega u)|_\Omega=u$, $u\in H^\alpha_p(\Omega)$.  For $p=2$ and arbitrary $\alpha>0$ every domain $\Omega$  trivially has the $H_2^\alpha(\Omega)$-extension property \cite[p.~142]{Rychkov00}.

\begin{lemma}\label{L:extensiondomain} Let $\Omega\subset \mathbb{R}^n$ be a domain.
\begin{enumerate}
\item[(i)] If, for some $1\leq p<\infty$ and positive integer $m$, the restriction $u\mapsto u|_\Omega$ is surjective  from $W^m_p(\mathbb{R}^n)$ onto $W_p^m(\Omega)$, then $\Omega$ satisfies the measure density condition. This is true in particular if $\Omega$ has the $W^m_p$-extension property.
\item[(ii)] If $\Omega$  satisfies the measure density condition, then it has the $H^\alpha_p$-extension property for every $1<p<\infty$ and $\alpha>0$.
\item[(iii)]  If $1<p<\infty$, $m$ is a positive integer and $\Omega$ has the $W^m_p$-extension property, then $H^m_p(\Omega)$ equals $W^m_p(\Omega)$ in the sense of equivalently normed spaces.
\item[(iv)] If {$n\geq 2$ and} $\Omega$ is an $(\varepsilon,\delta)$-domain, then $\Omega$ has the $W^m_p$-extension property for every $1\leq p<\infty$ and every positive integer $m$.
\end{enumerate}
\end{lemma}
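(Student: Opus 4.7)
My plan for Lemma \ref{L:extensiondomain} is to obtain each of the four statements essentially from known results in the literature, with only part (iii) requiring a short direct argument. The conceptual thread throughout is the interplay between Sobolev extension properties, the measure density condition, and the classical coincidence of Bessel potential spaces with intrinsically-defined Sobolev spaces at integer order.

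For (i) I will first observe that the restriction map $R\colon W^m_p(\R^n)\to W^m_p(\Omega)$ is bounded and linear, so its surjectivity lets me apply the open mapping theorem to produce a constant $C>0$ such that every $u\in W^m_p(\Omega)$ admits some $v\in W^m_p(\R^n)$ with $v|_\Omega=u$ and $\|v\|_{W^m_p(\R^n)}\le C\|u\|_{W^m_p(\Omega)}$. This is exactly the (non-linear) uniform extension hypothesis under which \cite[Theorem 2]{Hajlaszetal08} derives the measure density condition \eqref{E:measuredenscond}, so the first assertion of (i) follows by citation; the second sentence of (i) is trivial since the $W^m_p$-extension property plainly forces $R$ to be surjective. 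For (ii) I will appeal directly to Rychkov's extension theorem \cite[Theorem 1.1(a)]{Rychkov00}, which provides the required bounded linear operator $\mathrm{E}_\Omega\colon H^\alpha_p(\Omega)\to H^\alpha_p(\R^n)$ for every $1<p<\infty$ and $\alpha>0$ under the sole hypothesis \eqref{E:measuredenscond}. For (iv) I will invoke Jones' construction \cite[Theorem 1]{Jones81} as a black box; this is the deepest ingredient and would certainly be the main obstacle if one tried to build everything from scratch, but here it enters only through the reference.

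For (iii), the only part requiring genuine (if routine) work, I will rely on the classical identification $H^m_p(\R^n)=W^m_p(\R^n)$ with equivalent norms for $1<p<\infty$ and positive integer $m$ (see e.g.\ \cite[Section 2.5.6]{Triebel78}). Assuming the $W^m_p$-extension property via $\mathrm{E}_\Omega$, I will prove both inclusions in turn. Given $u\in W^m_p(\Omega)$, the extension $\mathrm{E}_\Omega u$ lies in $W^m_p(\R^n)=H^m_p(\R^n)$, so by the very definition of $H^m_p(\Omega)$ as a restriction space I obtain $u=(\mathrm{E}_\Omega u)|_\Omega\in H^m_p(\Omega)$ with $\|u\|_{H^m_p(\Omega)}\lesssim \|u\|_{W^m_p(\Omega)}$. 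Conversely, given $u\in H^m_p(\Omega)$, I pick $v\in H^m_p(\R^n)=W^m_p(\R^n)$ with $v|_\Omega=u$ and almost minimal norm; then every weak derivative $D^\beta v\in L^p(\R^n)$ with $|\beta|\le m$ restricts to $D^\beta u\in L^p(\Omega)$, which places $u$ in $W^m_p(\Omega)$ with the reverse norm estimate, and taking the infimum over admissible $v$ yields the equivalence of the two norms on $W^m_p(\Omega)=H^m_p(\Omega)$.
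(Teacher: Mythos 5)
Your proposal is correct and matches the paper's approach essentially verbatim: parts (i), (ii), and (iv) are obtained by citing \cite[Theorem 2]{Hajlaszetal08}, \cite[Theorem 1.1(a)]{Rychkov00}, and \cite[Theorem 1]{Jones81} respectively, and part (iii), which the paper dismisses as ``well-known and easy to see,'' you supply with the standard short argument via $H^m_p(\R^n)=W^m_p(\R^n)$ and an extension operator. The detour through the open mapping theorem in (i) is harmless but unnecessary, as \cite[Theorem 2]{Hajlaszetal08} already takes surjectivity of restriction as its hypothesis.
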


\begin{proof} In \cite[Theorem 2]{Hajlaszetal08} it was shown that with the assumptions in (i), the domain $\Omega$ satisfies the measure density condition. Item (ii) was proved in \cite[Theorem 1.1(a)]{Rychkov00}. Item (iii) is {well known} and easy to see. Item (iv) was proved in \cite[Theorem 1]{Jones81}.
\end{proof}

\begin{remark}\mbox{}
\begin{enumerate}
\item[(i)] For $1<p<\infty$ the surjectivity of the restriction in (i) and the $W^m_p$-extension property are actually equivalent \cite[Theorem 5]{Hajlaszetal08}. 
\item[(ii)] It is well known that every Lipschitz domain $\Omega\subset\mathbb{R}^n$, with $n\geq 2$, is an $(\varepsilon,\delta)$-domain \cite{Jones81}. Well-known extension theorems for Lipschitz domains are \cite[Chapter VI, Theorem 5]{Stein70}, \cite[Corollary 1.6]{Strichartz67} and \cite[Theorem 1]{Kalyabin85}. Further extension results for  $(\varepsilon,\delta)$-domains are \cite[Theorem 8]{Rogers06} and \cite[Theorem 2]{Seeger89}. 
\item[(iii)] Related extension results for fractional Sobolev spaces on domains defined using intrinsic norms and in the case that $0<\alpha<1$ can be found in \cite{Zhou15}. Extension results for Haj\l asz-Triebel-Lizorkin spaces are provided in \cite{HIT16}.
\end{enumerate}
\end{remark}

\begin{remark} For $1<p<\infty$, {$\alpha=1$,} and domains $\Omega$ having the $W^1_p$-extension property, the nontrivial implication in \eqref{E:domain} was proved in \cite[Theorem 6.1 and Remark 6.2]{Biegert09}; see also \cite[Theorems 3.20, 3.23 and 3.27]{Biegert09b}. For $p=2$,  $0<\alpha\leq 1$ and domains $\Omega$ having the $W^1_p$-extension property the nontrivial implication in \eqref{E:domain} can be shown using Dirichlet form methods, see \cite{ArendtWarma03},\cite[p. 53]{FOT2011} and the arguments used to prove \cite[Theorem 4.4]{FarkasJacob2001}. For smooth domains this kind of argument works for $p=2$ and $0<\alpha<\frac{3}{2}$ by combining interpolation \cite[Theorem 4.1]{Seeley72} and using suitably defined capacities \cite{FukushimaKaneko85}. {Theorem \ref{T:domain0}, Corollary \ref{C:domain}} and Lemma \ref{L:extensiondomain} extend these results.
\end{remark}

Given a domain $\Omega$ having the $H^\alpha_p$-extension property and a closed subset $\Gamma$ of $\overline{\Omega}$, we consider the composition 
\[\mathrm{Tr}_{\Gamma,0}^\Omega:=\mathrm{Tr}_{\Gamma,0}\circ \mathrm{E}_\Omega.\]
For $\alpha=1$ this kind of trace operator was used, for example, in \cite{CHR-PT24} and \cite{HR-PT2021}. By $H^\alpha_{p,0}(\overline{\Omega}\setminus \Gamma)$ we denote the closure of $\{v|_\Omega:\ v\in C_c^\infty(\mathbb{R}^n\setminus \Gamma)\}$ in $H_p^\alpha(\Omega)$. In the special case $\Gamma=\partial\Omega$ we recover the familiar definition of $H^\alpha_{p,0}(\Omega)$ as the closure of $C_c^\infty(\Omega)$ in $H_p^\alpha(\Omega)$. In the following, $\widetilde{B}(\Gamma)$ and $[\widetilde{B}(\Gamma)]_\mu$ are as defined in Sections \ref{S:intro} and \ref{S:kernels}. Our next 
lemma extends \cite[Corollary 6.3]{Biegert09}, where it was shown for $\alpha=1$ {in the case that $\Omega$ has the $W^1_p$-extension property. Note that if $\Omega$ has the $W^1_p$-extension property then it satisfies the measure density condition by Lemma \ref{L:extensiondomain} (i). Further, if $\Omega$ satisfies the measure density condition then $\Omega$ has the $H^\alpha_p$-extension property by Lemma \ref{L:extensiondomain} (ii), and $\mathcal{L}^n(\partial \Omega)=\mathrm{Cap}_{\alpha,p}(S) = 0$ by Corollary \ref{C:domain}, so that all the conditions of the next lemma are satisfied.}

\begin{lemma}\label{L:traceop}
Let $1<p<\infty$ and $\alpha>0$. {Suppose that $\Omega\subset \mathbb{R}^n$ has the $H^\alpha_p$-extension property and that $\mathcal{L}^n(\partial \Omega)=\mathrm{Cap}_{\alpha,p}(S) = 0$, where $S$ is given by \eqref{E:S}.} 
Let $\Gamma$ be a closed subset of $\overline{\Omega}$ such that $\overline{\Omega}\setminus \Gamma$ is dense in $\overline{\Omega}$.
\begin{enumerate}
\item[(i)] The operator $\mathrm{Tr}_{\Gamma,0}^\Omega:H^\alpha_p(\Omega)\to \widetilde{B}(\Gamma)$ is bounded, linear and surjective. It does not depend on the 
particular choice of the extension operator $\mathrm{E}_\Omega$, {indeed if $u\in H^\alpha_p(\R^n)$ then $\mathrm{Tr}_{\Gamma,0}^\Omega (u|_\Omega) = \mathrm{Tr}_{\Gamma,0}u$.}
\item[(ii)] If $0<\alpha<\alpha_p(\Gamma)+1$ or $\alpha=\alpha_p(\Gamma)+1$ and $\cpct_{\alpha_p(\Gamma),p}(\Gamma)=0$, then $\ker \mathrm{Tr}_{\Gamma,0}^\Omega=H^\alpha_{p,0}(\overline{\Omega}\setminus\Gamma)$. {In particular, choosing $\Gamma=\partial\Omega$ gives $\ker \mathrm{Tr}_{\partial\Omega,0}^\Omega=H^\alpha_{p,0}(\Omega)$.}
\end{enumerate}
\end{lemma}

\begin{proof}
The linearity and boundedness of $\mathrm{Tr}_{\Gamma,0}^\Omega$ are clear. To prove the surjectivity, suppose that $f\in \widetilde{B}(\Gamma)$ and $u\in H^\alpha_p(\mathbb{R}^n)$ is such that $f=\mathrm{Tr}_{\Gamma,0} u$. Then $u|_\Omega\in H^\alpha_p(\Omega)$. For any extension operator $\mathrm{E}_\Omega$, since both $u$ itself and $\mathrm{E}_\Omega (u|_\Omega)$ extend $u|_\Omega$ to all of $\mathbb{R}^n$, we have 
$(\mathrm{E}_\Omega (u|_\Omega))^\sim=\widetilde{u}$ $(\alpha,p)$-q.e.\ on $\overline{\Omega}$ by Theorem \ref{T:domain0}. 
In particular, 
$\mathrm{Tr}_{\Gamma,0}^\Omega {(u|_\Omega)}=\mathrm{Tr}_{\Gamma,0}(\mathrm{E}_\Omega (u|_\Omega))=\mathrm{Tr}_{\Gamma,0} u=f$. This gives (i).

To prove (ii), suppose first that $u\in H_p^\alpha(\Omega)$ is such that $\mathrm{Tr}_{\Gamma,0}^\Omega=0$ in $\widetilde{B}(\Gamma)$, that is, $(\mathrm{E}_\Omega u)^\sim|_\Gamma=0$ $(\alpha,p)$-q.e. Then $\mathrm{E}_\Omega u\in \widetilde{H}_p^\alpha(\mathbb{R}^n\setminus\Gamma)$ by \eqref{E:kerTr}, so that there is a sequence $(v_k)\subset C_c^\infty(\mathbb{R}^n\setminus \Gamma)$ converging to $\mathrm{E}_\Omega u$ in $H^\alpha_p(\mathbb{R}^n)$. Consequently also 
\begin{equation}\label{E:convinOm}
\lim_k v_k|_\Omega=u\quad \text{in $H^\alpha_p(\Omega)$},
\end{equation}
hence $u\in H_{p,0}^\alpha(\overline{\Omega}\setminus \Gamma)$. This shows that $\ker \mathrm{Tr}_{\Gamma,0}^\Omega\subset H^\alpha_{p,0}(\overline{\Omega}\setminus \Gamma)$. To see the converse inclusion $H^\alpha_{p,0}(\overline{\Omega}\setminus \Gamma)\subset \ker \mathrm{Tr}_{\Gamma,0}^\Omega$, suppose that $u\in H_{p,0}^\alpha(\overline{\Omega}\setminus \Gamma)$ and let $(v_k)\subset C_c^\infty(\mathbb{R}^n\setminus \Gamma)$ be such that \eqref{E:convinOm} holds. For any $k$, both $\mathrm{E}_\Omega (v_k|_\Omega)$ and $v_k$ itself are in $H_p^\alpha(\mathbb{R}^n)$, and therefore {Theorem \ref{T:domain0}}  implies that $(\mathrm{E}_\Omega v_k)^\sim=\widetilde{v}_k=0$ $(\alpha,p)$-q.e.\ on $\Gamma$. 
On the other hand, $\lim_k \mathrm{E}_\Omega v_k=\mathrm{E}_\Omega u$ in $H^\alpha_p(\mathbb{R}^n)$, because $\mathrm{E}_\Omega$ is linear and bounded. This implies that there is a sequence $(k_\ell)$ with $k_\ell\to \infty$ as $\ell\to\infty$ such that $(\mathrm{E}_\Omega u)^\sim=\lim_\ell (\mathrm{E}_\Omega v_{k_\ell})^\sim=0$ $(\alpha,p)$-q.e.\ on $\Gamma$, see for instance \cite[Proposition 2.3.8]{AH96}. Consequently $\mathrm{Tr}_{\Gamma,0}^\Omega u=0$, as desired.
\end{proof}

Similar arguments give the following consequence of \eqref{E:traceqe}. 
%

\begin{corollary}
{Let $1<p<\infty$ and $\alpha>0$ and let $\Omega\subset \mathbb{R}^n$ be as in Lemma \ref{L:traceop}.} Let $\Gamma$ be a closed subset of $\overline{\Omega}$ such that $\overline{\Omega}\setminus \Gamma$ is dense in $\overline{\Omega}$. Then
\begin{equation}\label{E:traceqedomain}
H^\alpha_{p,0}(\overline{\Omega}\setminus \Gamma)=\{u\in H^\alpha_p(\Omega):\ \big(({D^\beta \mathrm{E}_\Omega u})^\sim|_{\Gamma}\big)_{|\beta|\leq m_s}=0\}.
\end{equation}
\end{corollary}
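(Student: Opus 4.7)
The plan is to run the same argument as in the proof of Lemma \ref{L:traceop}(ii), but with Netrusov's theorem \eqref{E:traceqe} taking the place of \eqref{E:kerTr}; this requires controlling the $(\alpha-|\beta|,p)$-quasi continuous representatives of \emph{every} derivative $D^\beta$ with $|\beta|\leq m_s$, not just the function itself. The key infrastructure is already in place: the measure density condition on $\Omega$ supplies, via Lemma \ref{L:extensiondomain}(ii), bounded linear extension operators $\mathrm{E}_\Omega\colon H^{\alpha-|\beta|}_p(\Omega)\to H^{\alpha-|\beta|}_p(\mathbb{R}^n)$ at every order, while Theorem \ref{T:domain} ensures that any two $H^{\alpha'}_p(\mathbb{R}^n)$-extensions of the same function on $\Omega$ have quasi continuous representatives coinciding $(\alpha',p)$-q.e.\ on $\overline{\Omega}$. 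In particular the right-hand side of \eqref{E:traceqedomain} will not depend on the choice of extension operator selected for each $\beta$.

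For the inclusion $\subset$, I would take $u\in H^\alpha_{p,0}(\overline{\Omega}\setminus\Gamma)$ and a sequence $(v_k)\subset C_c^\infty(\mathbb{R}^n\setminus\Gamma)$ with $v_k|_\Omega\to u$ in $H^\alpha_p(\Omega)$. For $|\beta|\leq m_s$, $D^\beta(v_k|_\Omega)=(D^\beta v_k)|_\Omega$ converges to $D^\beta u$ in $H^{\alpha-|\beta|}_p(\Omega)$, hence $\mathrm{E}_\Omega D^\beta(v_k|_\Omega)\to \mathrm{E}_\Omega D^\beta u$ in $H^{\alpha-|\beta|}_p(\mathbb{R}^n)$. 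Since $D^\beta v_k$ is another $H^{\alpha-|\beta|}_p(\mathbb{R}^n)$-extension of $D^\beta(v_k|_\Omega)$ that vanishes on $\Gamma$ by the support condition, Theorem \ref{T:domain} gives $(\mathrm{E}_\Omega D^\beta(v_k|_\Omega))^\sim =0$ $(\alpha-|\beta|,p)$-q.e.\ on $\Gamma\subset\overline{\Omega}$. Extracting a subsequence whose representatives converge quasi everywhere as in \cite[Proposition 2.3.8]{AH96} then yields $(\mathrm{E}_\Omega D^\beta u)^\sim|_\Gamma=0$ $(\alpha-|\beta|,p)$-q.e.

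For the reverse inclusion, I would fix $u\in H^\alpha_p(\Omega)$ satisfying the right-hand condition, set $v:=\mathrm{E}_\Omega u\in H^\alpha_p(\mathbb{R}^n)$, and for each $|\beta|\leq m_s$ note that both $D^\beta v$ and $\mathrm{E}_\Omega D^\beta u$ lie in $H^{\alpha-|\beta|}_p(\mathbb{R}^n)$ and restrict to $D^\beta u$ on $\Omega$. Theorem \ref{T:domain} applied to their difference then gives $(D^\beta v)^\sim=(\mathrm{E}_\Omega D^\beta u)^\sim$ $(\alpha-|\beta|,p)$-q.e.\ on $\overline{\Omega}$ and hence, by assumption, q.e.\ on $\Gamma$. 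Thus $((D^\beta v)^\sim|_\Gamma)_{|\beta|\leq m_s}=0$, and \eqref{E:traceqe} places $v$ in $\widetilde{H}^\alpha_p(\mathbb{R}^n\setminus\Gamma)$; approximating $v$ by $(w_k)\subset C_c^\infty(\mathbb{R}^n\setminus\Gamma)$ in $H^\alpha_p(\mathbb{R}^n)$ and restricting to $\Omega$ produces the required approximation of $u$ in $H^\alpha_p(\Omega)$. The main obstacle is the bookkeeping across all derivative orders $0\leq |\beta|\leq m_s$ simultaneously, together with verifying that the right-hand side is well defined when the extension operators used may vary with $\beta$; both issues are resolved uniformly by Theorem \ref{T:domain} and the order-independent existence of $\mathrm{E}_\Omega$ provided by the measure density condition.
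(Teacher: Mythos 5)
Your proposal is correct and is exactly the ``similar argument'' the paper alludes to after Lemma \ref{L:traceop}: you substitute Netrusov's theorem \eqref{E:traceqe} for \eqref{E:kerTr}, apply Theorem \ref{T:domain} at each derivative order $\alpha-|\beta|>0$ (which the measure density condition supports via Lemma \ref{L:extensiondomain}(ii)), and carry out both inclusions as in the proof of Lemma \ref{L:traceop}(ii) with the extra bookkeeping over all $|\beta|\leq m_s$. No gaps.
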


Given a domain $\Omega$ having the $H^\alpha_p$-extension property and a non-zero Radon measure $\mu$ with $\Gamma=\supp\mu
\subset \overline{\Omega}$ and satisfying the hypotheses of Lemma \ref{L:smooth} (i), we can consider the composition 
\[[\mathrm{Tr}_{\Gamma,0}^\Omega]_\mu:=[\mathrm{Tr}_{\Gamma,0}]_\mu\circ \mathrm{E}_\Omega.\] 
The next corollary gives new results for general measures $\mu$. It is an immediate consequence of {Corollary \ref{C:variant}}.

\begin{corollary}\label{C:traceop}
{Let $1<p<\infty$ and $\alpha>0$ and let $\Omega\subset \mathbb{R}^n$ be as in Lemma \ref{L:traceop}.} Let $\mu$ be a non-zero Radon measure with support $\Gamma=\supp\mu$ contained in $\overline{\Omega}$ and such that $\overline{\Omega}\setminus \Gamma$ is dense in $\overline{\Omega}$. {Suppose that $d:\Gamma\to [0,\infty)$ is a Borel function such that \eqref{E:Dbar} and \eqref{E:traceconddx} hold.}
\begin{enumerate}
\item[(i)]  The operator $[\mathrm{Tr}_{\Gamma,0}^\Omega]_\mu:H^\alpha_p(\Omega)\to [\widetilde{B}(\Gamma)]_\mu$ is bounded, linear and surjective and does not depend on the particular choice of the extension operator $\mathrm{E}_\Omega$. 
\item[(ii)] {Suppose that  \eqref{E:L}, \eqref{E:fatDbar} and \eqref{E:variant} hold and if $\alpha\leq \frac{n}{p}$ also $\cpct_{\alpha,p}(S_{d,\mu}(\Gamma))=0$. Then 
$\ker  [\mathrm{Tr}_{\Gamma,0}^\Omega]_\mu=H^\alpha_{p,0}(\overline{\Omega}\setminus \Gamma)$. This remains true if $\alpha=\alpha_p(\Gamma)+1$ and $\cpct_{\alpha_p(\Gamma),p}(\Gamma)=0$. Choosing $\Gamma=\partial\Omega$ gives $\ker [\mathrm{Tr}_{\partial\Omega,0}^\Omega]_\mu=H^\alpha_{p,0}(\Omega)$.}
\end{enumerate}
\end{corollary}

\begin{remark} In the special case that $\Omega$ is a bounded $(\varepsilon,\delta)$-domain {in $\mathbb{R}^n$, $n\geq 2$}, $\Gamma=\partial\Omega$ a $d$-set with $n-1\leq d<n$, $\mu=\mathcal{H}^d|_{\partial\Omega}$, $1<p<\infty$ and $\frac{n-d}{p}<\alpha\leq 1$ the equality $\ker  [\mathrm{Tr}_{\partial\Omega,0}^\Omega]_\mu=H^\alpha_{p,0}(\Omega)$ was shown in  \cite[Theorem 3.5]{FarkasJacob2001}. Corollary \ref{C:traceop} extends this result to the natural parameter range conjectured in \cite[Remark 3.7]{FarkasJacob2001} and beyond the $d$-set case. 
\end{remark}

In the special case of a $d$-measure $\mu$ we obtain the following counterpart of Corollary \ref{C:dsettrace}.

\begin{corollary}\label{C:dsetdomain} 
{Let $1<p<\infty$, $0<d\leq n$ and $\alpha>\frac{n-d}{p}$, let $\Omega\subset \mathbb{R}^n$ be as in Lemma \ref{L:traceop} and let $\mu$ be a $d$-measure with support $\Gamma=\supp\mu$ contained in $\overline{\Omega}$ and such that $\overline{\Omega}\setminus \Gamma$ is dense in $\overline{\Omega}$.} Then 
\begin{equation}\label{E:keridentdomain}
\ker [\mathrm{Tr}_{\Gamma,m_t}^\Omega]_\mu=H^\alpha_{p,0}(\mathbb{R}^n\setminus \Gamma).
\end{equation}
holds, where 
\[[\mathrm{Tr}_{\Gamma,m}^\Omega]_\mu u:=([{D^\beta \mathrm{E}_\Omega u}]_\mu)_{|\beta|\leq m},\quad u\in H^\alpha_p(\Omega).\]
\end{corollary}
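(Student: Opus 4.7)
The plan is to reduce this result to Corollary \ref{C:dsettrace} on $\mathbb{R}^n$, exploiting the $H^\alpha_p$-extension property granted by Lemma \ref{L:extensiondomain}(ii) together with the coincidence of quasi continuous representatives on $\overline{\Omega}$ provided by Theorem \ref{T:domain}, essentially following the template of the proof of Lemma \ref{L:traceop}(ii).

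The first preparatory step is to verify that, for each $|\beta|\leq m_t$, the $\mu$-equivalence class $[D^\beta \mathrm{E}_\Omega u]_\mu$ is independent of the choice of extension operator $\mathrm{E}_\Omega:H^\alpha_p(\Omega)\to H^\alpha_p(\mathbb{R}^n)$. Indeed, for two such extensions $\mathrm{E}_1$ and $\mathrm{E}_2$, the difference $D^\beta(\mathrm{E}_1 u-\mathrm{E}_2 u)$ lies in $H^{\alpha-|\beta|}_p(\mathbb{R}^n)$ (note $\alpha-|\beta|>0$ since $|\beta|\leq m_t\leq m_s$) and vanishes $\mathcal{L}^n$-a.e.\ on $\Omega$. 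Theorem \ref{T:domain} then forces the $(\alpha-|\beta|,p)$-quasi continuous representatives to agree q.e.\ on $\overline{\Omega}$, and Lemma \ref{L:smooth} (which applies to $d$-measures) upgrades this to $\mu$-a.e.\ agreement on $\Gamma$. This justifies the statement that different extensions $\mathrm{E}_\Omega$ may be used for different $\beta$.

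For the inclusion $\ker[\mathrm{Tr}_{\Gamma,m_t}^\Omega]_\mu\subset H^\alpha_{p,0}(\overline{\Omega}\setminus\Gamma)$, I would fix a single extension operator $\mathrm{E}_\Omega$ and let $u\in H^\alpha_p(\Omega)$ satisfy $[\mathrm{Tr}_{\Gamma,m_t}^\Omega]_\mu u=0$. By the invariance just proved, $[D^\beta \mathrm{E}_\Omega u]_\mu=0$ for all $|\beta|\leq m_t$, i.e.\ $\mathrm{E}_\Omega u\in\ker[\mathrm{Tr}_{\Gamma,m_t}]_\mu$. Corollary \ref{C:dsettrace} then yields $\mathrm{E}_\Omega u\in\widetilde{H}^\alpha_p(\mathbb{R}^n\setminus\Gamma)$, so a sequence $(v_k)\subset C_c^\infty(\mathbb{R}^n\setminus\Gamma)$ converges to $\mathrm{E}_\Omega u$ in $H^\alpha_p(\mathbb{R}^n)$; restricting to $\Omega$ gives $v_k|_\Omega\to u$ in $H^\alpha_p(\Omega)$, placing $u$ in $H^\alpha_{p,0}(\overline{\Omega}\setminus\Gamma)$.

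For the reverse inclusion, I would take $u\in H^\alpha_{p,0}(\overline{\Omega}\setminus\Gamma)$, choose an approximating sequence $(v_k)\subset C_c^\infty(\mathbb{R}^n\setminus\Gamma)$ with $v_k|_\Omega\to u$ in $H^\alpha_p(\Omega)$, and fix any admissible extension $\mathrm{E}_\Omega$ (one for each $\beta$). Since $\mathrm{E}_\Omega(v_k|_\Omega)$ and $v_k$ agree on $\Omega$, applying Theorem \ref{T:domain} to their difference (and to the difference of their derivatives, as above) together with Lemma \ref{L:smooth} gives $(D^\beta\mathrm{E}_\Omega(v_k|_\Omega))^\sim=(D^\beta v_k)^\sim$ $\mu$-a.e.\ on $\Gamma$; as $v_k$ is smooth and supported away from $\Gamma$, the right-hand side vanishes identically on $\Gamma$, so $[D^\beta\mathrm{E}_\Omega(v_k|_\Omega)]_\mu=0$. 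Boundedness of $\mathrm{E}_\Omega$ gives $D^\beta\mathrm{E}_\Omega(v_k|_\Omega)\to D^\beta\mathrm{E}_\Omega u$ in $H^{\alpha-|\beta|}_p(\mathbb{R}^n)$, and a standard subsequence argument \cite[Proposition 2.3.8]{AH96} transfers the vanishing to $(D^\beta\mathrm{E}_\Omega u)^\sim$ q.e.\ on $\Gamma$, hence $\mu$-a.e. Combined with the invariance from the first step, this yields $[\mathrm{Tr}_{\Gamma,m_t}^\Omega]_\mu u=0$. The only mild obstacle is the bookkeeping needed to reconcile the use of different extension operators for different multiindices, which is handled uniformly by the first preparatory step.
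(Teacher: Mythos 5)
Your proof is correct and follows the route the paper clearly has in mind: reduce to the $\mathbb{R}^n$-result (Corollary~\ref{C:dsettrace}) via the extension operator from Lemma~\ref{L:extensiondomain}(ii), using Theorem~\ref{T:domain} to reconcile quasi continuous representatives and Lemma~\ref{L:smooth} to pass from q.e.\ to $\mu$-a.e.\ statements, exactly as in the proof of Lemma~\ref{L:traceop}(ii) but now applied to each derivative $D^\beta$ with $|\beta|\leq m_t$. (The paper gives no written proof, only points to \enquote{similar arguments}.) Two small imprecisions worth tidying: (a) when $\alpha-|\beta|>\frac{n}{p}$, Lemma~\ref{L:smooth} does not literally apply, but in that range $(D^\beta w)^\sim$ is continuous and q.e.\ vanishing is everywhere vanishing, so the upgrade to $\mu$-a.e.\ is trivial — you should say this; (b) in the reverse inclusion you state only that $(D^\beta\mathrm{E}_\Omega(v_k|_\Omega))^\sim$ vanishes $\mu$-a.e.\ on $\Gamma$ but then claim the subsequence argument gives q.e.\ vanishing of the limit. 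Either strengthen the premise — Theorem~\ref{T:domain} actually yields $(D^\beta\mathrm{E}_\Omega(v_k|_\Omega))^\sim = D^\beta v_k = 0$ q.e.\ on $\overline{\Omega}$, since each $v_k$ vanishes in a neighbourhood of $\Gamma$ — or drop the q.e.\ claim and conclude $\mu$-a.e.\ directly, which is all that is needed. Neither point affects the validity of the final conclusion.
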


\begin{remark}\mbox{}
\begin{enumerate}
\item[(i)] For $\Gamma=\partial\Omega$ and positive integer $\alpha$ the identity \eqref{E:keridentdomain} was proved in \cite[Theorem 3]{Wallin91}.
\item[(ii)] Based on Corollary \ref{C:trace} one can also formulate a variant of Corollary \ref{C:dsetdomain} for more general measures $\mu$. We leave this to the reader.
\end{enumerate}
\end{remark}

\section{The convergence of Galerkin schemes}
\label{S:Galerkin}

We briefly comment on an application; for further detail see \cite{Ch-WCHR-PS2025}. Suppose that {$n\geq 1$, $p=2$, $\Gamma\subset \mathbb{R}^n$ is compact and $\alpha$ is as in \eqref{E:Netrusov0}}. Let $\mathbb{H}^\alpha(\mathbb{R}^n\setminus \Gamma)$ denote the orthogonal complement of $\ker \mathrm{Tr}_{\Gamma,0}=\widetilde{H}_2^\alpha(\mathbb{R}^n\setminus \Gamma)$ in $H_2^\alpha(\mathbb{R}^n)$ and let $(\mathbb{H}^\alpha(\mathbb{R}^n\setminus \Gamma))^\ast$ denote its topological dual. By restriction and trivial extension the space $(\mathbb{H}^\alpha(\mathbb{R}^n\setminus \Gamma))^\ast$ can be identified with the space of all elements $f$ of $H^{-\alpha}_2(\mathbb{R}^n)$ with support in $\Gamma$, e.g., \cite[Cor.~3.4]{Ch-WHM2017}. 

{ 
Let $\mu$ be a non-zero Radon measure with $\supp\mu=\Gamma$, obviously $\mu$ is finite. By $L^2(\Gamma,\mu)$ we denote the Hilbert space of all $\mu$-square integrable elements of $L^0(\Gamma,\mu)$. If $\alpha\leq \frac{n}{2}$, assume that \eqref{E:fatDbar} and \eqref{E:traceconddx} hold. Then we can, {arguing} as in Remark  \ref{R:upregd} (i), find some $d_{\min}>n-2\alpha$ such that $\mu$ is upper $d_{\min}$-regular. In this case $[\mathrm{Tr}_{\Gamma,0}]_\mu:H_2^\alpha(\mathbb{R}^n)\to L^2(\Gamma,\mu)$ is a bounded linear operator, see \cite[Theorem~1]{J79} or \cite[Theorem 7.2.2]{AH96}. If $\alpha>\frac{n}{2}$, then the boundedness of this linear operator is a consequence of the finiteness of $\mu$ and the Sobolev embedding of $H_2^\alpha(\mathbb{R}^n)$ into a space of H\"older continuous functions.
The restriction \[[\mathrm{tr_\Gamma}]_\mu:=[\mathrm{Tr}_{\Gamma,0}]_\mu|_{\mathbb{H}^\alpha(\mathbb{R}^n\setminus \Gamma)}\] 
of $[\mathrm{Tr}_{\Gamma,0}]_\mu$ to  $\mathbb{H}^\alpha(\mathbb{R}^n\setminus \Gamma)$ has the adjoint $[\mathrm{tr_\Gamma}]_\mu^\ast:L^2(\Gamma,\mu)\to (\mathbb{H}^\alpha(\mathbb{R}^n\setminus \Gamma))^\ast$.  The next corollary is a consequence of Remark \ref{R:trivial} and Lemma \ref{L:allequiv}.}

\begin{corollary}\label{C:denserange} {
Let  $p=2$, let $\Gamma$ be compact and let $\alpha$ be as in \eqref{E:Netrusov0}. If $\alpha\leq \frac{n}{2}$, assume that \eqref{E:fatDbar} and \eqref{E:traceconddx}
hold. }Then the following are equivalent:
\begin{enumerate}
\item[(i)] the equivalence \eqref{E:wish} holds, 
\item[(ii)] the operator $[\mathrm{tr_\Gamma}]_\mu$ is injective,
\item[(iii)] the operator $[\mathrm{tr_\Gamma}]_\mu^\ast$ has dense image.
\end{enumerate}
{If $\alpha >\frac{n}{2}$ or \eqref{E:L} and \eqref{E:suffconddx} hold, then (i), (ii) and (iii) are true.}
\end{corollary}

Item (iii) in Corollary \ref{C:denserange} ensures the convergence of Galerkin schemes for integral equation formulations of time-harmonic 
acoustic scattering problems  for  sound soft fractal obstacles \cite{Ch-WCHR-PS2025,CCh-WCGHM25, CCh-WGHM24, Ch-WHM2017, Ch-WHMB2021}. Formulated in terms of functional analysis, the task, given some non-empty compact $\Gamma\subset \R^n$, is to find an element $\phi$ of $(\mathbb{H}^\alpha(\mathbb{R}^n\setminus \Gamma))^\ast$ such that 
\begin{equation}\label{E:IE}
A\phi=g,
\end{equation}
where $A:(\mathbb{H}^\alpha(\mathbb{R}^n\setminus \Gamma))^\ast\to \mathbb{H}^\alpha(\mathbb{R}^n\setminus \Gamma)$ is a given invertible linear operator which is the sum of a coercive and a compact operator and $g\in \mathbb{H}^\alpha(\mathbb{R}^n\setminus \Gamma)$ is a given function. Now it is desirable to find a suitable sequence $(W_k)$ of finite dimensional subspaces $W_k$ of $(\mathbb{H}^\alpha(\mathbb{R}^n\setminus \Gamma))^\ast$ and a sequence $(\phi_k)$ of elements $\phi_k\in W_k$ converging to $\phi$ in $(\mathbb{H}^\alpha(\mathbb{R}^n\setminus \Gamma))^\ast$. Since 
\[ \left\langle[\mathrm{tr_\Gamma}]_\mu^\ast f,u\right\rangle_{(\mathbb{H}^\alpha(\mathbb{R}^n\setminus \Gamma))^\ast \times \mathbb{H}^\alpha(\mathbb{R}^n\setminus \Gamma)}=\left( f, [\mathrm{tr_\Gamma}]_\mu u\right)_{L^2(\Gamma,\mu)},\qquad f\in L^2(\Gamma,\mu),\ u\in \mathbb{H}^\alpha(\mathbb{R}^n\setminus \Gamma),\]
it seems convenient to construct a suitable sequence $(V_k)$ of finite dimensional subspaces in $L^2(\Gamma,\mu)$ and to try to \enquote{transfer} them to $(\mathbb{H}^\alpha(\mathbb{R}^n\setminus \Gamma))^\ast$ using $[\mathrm{tr_\Gamma}]_\mu^\ast$.
If $(\pi_k)$ is a sequence of finer and finer partitions $\pi_k$ of $\Gamma$ in a $\mu$-a.e.\ sense, the sequence $(V_k)$ of spaces $V_k$ of {$\pi_k$-locally constant} functions is a good choice.  (Indeed, in the case that $\Gamma$ is the boundary of a bounded Lipschitz domain, this choice for $(V_k)$ is precisely a standard piecewise-constant boundary element method; see \cite[Remark~4.2]{CCh-WCGHM25}, \cite[Remark~3.4]{Ch-WCHR-PS2025}.)
If the mesh sizes of the $\pi_k$ go to zero as $k\to\infty$, then $(V_k)$ is \emph{asymptotically dense in $L^2(\Gamma,\mu)$}, meaning that
\[\lim_{k\to\infty} \min_{f_k\in V_k}\|f_k-f\|_{L^2(\Gamma,\mu)}=0,\quad f\in L^2(\Gamma,\mu).\]
The key observation is the following immediate consequence of Corollary \ref{C:trace}. The last sentence of this corollary is part of standard theory for the Galerkin method in the case that $A$ is invertible and the sum of a coercive and a compact operator; for details see \cite[Corollary~4.2]{Ch-WCHR-PS2025}.

\begin{corollary}\label{C:asympdense} {
Let  $p=2$, let $\Gamma$ be compact and let $\alpha$ be as in \eqref{E:Netrusov0}. If $\alpha\leq \frac{n}{2}$, assume that  \eqref{E:fatDbar} and \eqref{E:traceconddx} hold.} Let $(V_k)$ be asymptotically dense in $L^2(\Gamma,\mu)$. Then the sequence $(W_k)$ of spaces $W_k:=[\mathrm{tr_\Gamma}]_\mu^\ast(V_k)$ is asymptotically dense in $(\mathbb{H}^\alpha(\mathbb{R}^n\setminus \Gamma))^\ast$ if and only if one (and hence all) of the conditions (i), (ii) or (iii) in Corollary \ref{C:denserange} holds. The Galerkin scheme for \eqref{E:IE}, based on the sequence $(W_k)$, converges for every $g\in \mathbb{H}^\alpha(\mathbb{R}^n\setminus \Gamma)$ if and only if $(W_k)$ is asymptotically dense, so if and only if one of the conditions (i), (ii) or (iii) in Corollary \ref{C:denserange} holds.
\end{corollary}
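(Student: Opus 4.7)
The proof should be essentially a two-step combination: first, reducing asymptotic density of $(W_k)$ in $(\mathbb{H}^\alpha(\mathbb{R}^n\setminus \Gamma))^\ast$ to density of the range of $[\mathrm{tr_\Gamma}]_\mu^\ast$, and second, invoking the standard Galerkin convergence theorem for Fredholm equations.

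For the first equivalence, I would argue as follows. Suppose $(W_k)$ is asymptotically dense. Every element of $\bigcup_k W_k$ lies in the image of $[\mathrm{tr_\Gamma}]_\mu^\ast$, so for any $\phi\in (\mathbb{H}^\alpha(\mathbb{R}^n\setminus \Gamma))^\ast$ asymptotic density yields a sequence $\phi_k\in W_k$ converging to $\phi$, exhibiting $\phi$ as a limit of elements in the range, hence the range is dense. For the converse, given $\phi$ and $\varepsilon>0$, use density of the range to pick $f\in L^2(\Gamma,\mu)$ with $\|[\mathrm{tr_\Gamma}]_\mu^\ast f-\phi\|<\varepsilon/2$; by asymptotic density of $(V_k)$ pick $f_k\in V_k$ with $f_k\to f$ in $L^2(\Gamma,\mu)$; then by boundedness of $[\mathrm{tr_\Gamma}]_\mu^\ast$ we get $[\mathrm{tr_\Gamma}]_\mu^\ast f_k\in W_k$ with $\|[\mathrm{tr_\Gamma}]_\mu^\ast f_k-\phi\|<\varepsilon$ for all $k$ large enough, proving $\min_{\phi_k\in W_k}\|\phi_k-\phi\|\to 0$. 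Combining this with Corollary \ref{C:denserange}, which already provides the equivalence of (i), (ii), (iii) and the sufficiency of \eqref{E:suffconddx}, gives the first statement.

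For the second statement, I would appeal directly to the standard Galerkin convergence theorem for an invertible operator that is the sum of a coercive and a compact operator, as cited in \cite[Corollary 4.2]{Ch-WCHR-PS2025}. In that setting, the Galerkin scheme based on a sequence of finite dimensional subspaces $(W_k)$ converges for every right-hand side if and only if $(W_k)$ is asymptotically dense in the ambient space. Applied to $A:(\mathbb{H}^\alpha(\mathbb{R}^n\setminus \Gamma))^\ast\to \mathbb{H}^\alpha(\mathbb{R}^n\setminus \Gamma)$ and the subspaces $W_k=[\mathrm{tr_\Gamma}]_\mu^\ast(V_k)$, this yields the last sentence once the equivalence with (i), (ii), (iii) established in the first step is invoked.

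There is no serious obstacle here: the result is really an assembly of Corollary \ref{C:denserange} with an elementary boundedness/approximation argument and a citation to classical Galerkin theory. The only point demanding a little care is ensuring that the approximation argument in the converse direction produces a sequence in $W_k$ for each individual $k$ large enough (rather than merely existence along a subsequence), but this is immediate from the boundedness of $[\mathrm{tr_\Gamma}]_\mu^\ast$ combined with the full (not subsequential) asymptotic density of $(V_k)$ in $L^2(\Gamma,\mu)$.
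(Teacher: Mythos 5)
Your proposal is correct and follows essentially the same route as the paper. The paper treats Corollary \ref{C:asympdense} as an immediate consequence of the foregoing material and does not give a detailed argument: it relies on exactly the two ingredients you spell out, namely the elementary equivalence between asymptotic density of $(W_k)=[\mathrm{tr_\Gamma}]_\mu^\ast(V_k)$ and density of the range of $[\mathrm{tr_\Gamma}]_\mu^\ast$ (using boundedness of the adjoint and asymptotic density of $(V_k)$, then closing the loop via the equivalence of (i)--(iii) in Corollary \ref{C:denserange}), together with the cited standard Galerkin convergence theorem for invertible operators that are a coercive-plus-compact sum. Your write-up simply makes the ``immediate'' explicit; there is no substantive difference from the intended argument.
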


We comment on a related fact. By definition $[\mathrm{tr_\Gamma}]_\mu:\mathbb{H}^\alpha(\mathbb{R}^n\setminus \Gamma)\to [\widetilde{B}(\Gamma)]_\mu$ is surjective, and we use $[\mathrm{tr_\Gamma}]_{\mu,s}$ as a shortcut notation for this operator. Its adjoint is $([\mathrm{tr_\Gamma}]_{\mu,s})^\ast: ([\widetilde{B}(\Gamma)]_\mu)^\ast\to (\mathbb{H}^\alpha(\mathbb{R}^n\setminus \Gamma))^\ast$. We state another direct consequence of Corollary \ref{C:trace} (cf.~\cite[Proposition~3.6]{Ch-WCHR-PS2025}).

\begin{corollary}\label{C:isometries}
{Let  $p=2$, let $\Gamma$ be compact and let $\alpha$ be as in \eqref{E:Netrusov0}. If $\alpha\leq \frac{n}{2}$, assume that  \eqref{E:fatDbar} and \eqref{E:traceconddx} hold.} Then the following are equivalent: 
\begin{enumerate}
\item[(i)] the equivalence \eqref{E:wish} holds, 
\item[(ii)] the operator $[\mathrm{tr_\Gamma}]_{\mu,s}$ is an isometric isomorphism,
\item[(iii)] the operator $([\mathrm{tr_\Gamma}]_{\mu,s})^\ast$ is an isometric isomorphism.
\end{enumerate}
{If $\alpha >\frac{n}{2}$ or \eqref{E:L} and \eqref{E:suffconddx} hold, then (i), (ii) and (iii) are true.}
\end{corollary}

\section{Local isoperimetric inequalities}\label{S:iso}

As preparation for the proof of Theorems \ref{T:main} and \ref{T:main_logcase}, in this section
we quote and refine an \enquote{isoperimetric} inequality from \cite[Section 8.5]{Mazya85} which relates measures and capacities of Borel sets, see \eqref{E:mazyaoutput} below. Let $\Phi:[0,\infty)\to [0,\infty)$ be an increasing function such that $t\mapsto t\Phi(t^{-1})$ decreases at infinity with limit $\lim_{t\to \infty}t\Phi(t^{-1})=0$, so that, in particular, $\Phi$ is continuous at zero with $\Phi(0)=0$. {Suppose $n\geq 1$, $1<p<\infty$, $0<\alpha p\leq n$ and let} 
\begin{equation}\label{E:capPsi}
\Psi(t):=\begin{cases}
(t\Phi(t^{-1}))^{p'-1}\ &\text{if $\alpha p<n$},\\
t(\Phi(t^{1-p}))^{p'-1}\ &\text{if $\alpha p=n$}.
\end{cases} 
\end{equation}

In the case $\alpha p<n$ we assume that there is a constant {$c(\Phi)>0$} such that 
\begin{equation}\label{E:intcondition}
\int_u^\infty \Psi(t)\frac{dt}{t}\leq {c(\Phi)}\:\Psi(u)
\end{equation}
for all $u>0$. In the case $\alpha p=n$ we assume that for any $u_0>0$ there is a constant {$c(\Phi)=c(\Phi, u_0)>0$} such that \eqref{E:intcondition} holds for all $u\geq u_0$. 

{
\begin{examples}\label{Ex:keyex}\mbox{}
\begin{enumerate}
\item[(i)] If $\alpha p\leq n$ and $1<\beta<\infty$, then $\Phi_\beta(t):=t^{\beta}$, $t\geq 0$, satisfies the above conditions. In particular, 
 \eqref{E:intcondition} holds for $\Psi(t)=t^{-\frac{\beta-1}{p-1}}$ with $c(\Phi_\beta):=\frac{p-1}{\beta-1}$ in the case $\alpha p<n$, while in the case $\alpha p=n$ it holds for $\Psi(t)=t^{1-\beta}$ with $c(\Phi_\beta)=\frac{1}{\beta-1}$. Moreover, if $1<\beta_-<\infty$, then \eqref{E:intcondition} holds simultaneously for all $\Phi_\beta$, $\beta_-\leq \beta<\infty$, with the same constant $c(\Phi_{\beta_-})$.
\item[(ii)] For $\alpha p=n$ and $0<d<\infty$ also the function $\Phi(t):=\exp(-dt^{1-p'})$, $t>0$, $\Phi(0):=0$, satisfies the above conditions. In particular, \eqref{E:intcondition} holds for $\Psi(t)=t\exp\big(-d(p'-1)t\big)$ with $c(\Phi, u_0):=\frac{p-1}{d u_0}$.
\end{enumerate}
\end{examples}

We use the following refined variant of \cite[Section 8.5,  Corollary 2 and Remark 2]{Mazya85}. 

\begin{lemma}\label{L:Mazya}
Let $\nu$ be a finite Borel measure on $\mathbb{R}^n$, $1<p<\infty$ and $0<\alpha\leq \frac{n}{p}$. Let $\Phi$ be as specified above and suppose that  there are constants $c_0>0$ and $r_0>0$ such that 
\begin{equation}\label{E:mazyainput}
\nu(B(y,\varrho))\leq\Phi\big(c_0\:\cpct_{\alpha,p}(B(y,\varrho))\big)
\end{equation}
for all $y\in\mathbb{R}^n$ and $0<\varrho\leq r_0$. Then there is a constant $r_1>0$ such that for any Borel set $E\subset \mathbb{R}^n$ with $\diam(E)\leq r_1$ we have 
\begin{equation}\label{E:mazyaoutput}
\nu(E)\leq C_n\,\Phi\big(c_1\,\Theta(c(\Phi))^{p-1}\,\cpct_{\alpha,p}(E)\big),
\end{equation}
where $C_n>0$ is a constant depending only on $n$, $c_1>0$ is constant depending only on $n$, $p$, $\alpha$ and $c_0$, $\Theta:(0,\infty)\to (0,\infty)$ is defined by
\[\Theta(s)=\begin{cases} \frac{s+p-1}{n-\alpha p}\ &\text{if $\alpha p<n$},\\ s+1+\frac{1}{e}\ &\text{if $\alpha p=n$},\end{cases}
\]
and $c(\Phi)$ is as in \eqref{E:intcondition}.
\end{lemma}
}

The inequality \eqref{E:mazyaoutput} encodes an embedding, see \cite[Theorem 7.2.1]{AH96} or \cite[Section 8.6]{Mazya85}, a classical variant was used in \cite{Caetano2000, JW84, Wallin91, Ziemer}.

{ 
\begin{proof}
The original \cite[Section 8.5, Corollary 2]{Mazya85} follows from \cite[Section 8.5, Theorem]{Mazya85} and \cite[Section 8.5, Corollary 1]{Mazya85}.
The assumptions on $\Phi$ stated before Examples \ref{Ex:keyex} ensure that the conclusions of \cite[Section 8.5, Theorem]{Mazya85} and its corollaries hold; the proof there uses only the monotonicity of $\Phi$ and inequality \eqref{E:intcondition}. Since the behaviour of constants is not tracked in \cite{Mazya85}, but needed for Lemma \ref{L:Mazya}, we revisit the proofs in \cite{Mazya85} and provide more detailed comments. 

By the monotonicity of $\Phi$ and the regularity properties of $\nu$ and $\cpct_{\alpha,p}$ it suffices to prove \eqref{E:mazyaoutput} for compact sets $E$. We therefore assume that $E$ is compact.

Suppose first that $\alpha p<n$. We write $\|u\|_{\dot{H}^\alpha_p(\mathbb{R}^n)}:=\|(|\cdot|^\alpha \hat u)^\vee\|_{L^p(\mathbb{R}^n)}$, $u\in C_c^\infty(\mathbb{R}^n)$. Given a compact set $K\subset \mathbb{R}^n$, we consider the capacity
\[\dcpct_{\alpha,p}(K):=\inf\big\lbrace  \|u\|_{\dot{H}^\alpha_p(\mathbb{R}^n)}^p:\ u\in C_c^\infty(\mathbb{R}^n),\ u\geq 1\ \text{on $K$}\big\rbrace,\]
and we extend $\dcpct_{\alpha,p}$ to general subsets of $\mathbb{R}^n$ in the same way as $\cpct_{\alpha,p}$ was extended in Section \ref{S:results}.

Choose $r_1:=r_0$ and let $\gamma>0$ be such that $\cpct_{\alpha,p}(B(0,\varrho))\leq \gamma \dcpct_{\alpha,p}(B(0,\varrho))$ for all $\varrho\leq r_1$. It is well known that such a $\gamma$ exists \cite[Proposition 5.1.4]{AH96}. Since $\diam(E)\leq r_1$, the inequality \eqref{E:mazyainput} and the monotonicity of both $\Phi$ and the capacities imply that the restriction $\nu_1:=\nu|_E$ of $\nu$ to $E$ satisfies 
\begin{equation}\label{E:nuone}
\nu_1(B(y,\varrho))\leq\Phi\big(c_0\gamma\:\dcpct_{\alpha,p}(B(y,\varrho))\big)\quad \text{for all $y\in\mathbb{R}^n$ and $\varrho>0$},
\end{equation}
a detailed proof can be found in \cite[Section 8.5, Remark 2]{Mazya85}. Estimate \eqref{E:nuone} and the arguments in the proofs of \cite[Section 8.5, Theorem and Corollary 1]{Mazya85} (which we discuss in more detail below) show that the compact set $E$ admits a finite cover by open balls $B(x_k,r_k)$ such that $\nu(E)=\nu_1(E)$ is bounded by
\begin{equation}\label{E:intermediate}
 \sum_k \Phi(c_0\gamma\dcpct_{\alpha,p}(B(x_k,r_k))\leq C_n\Phi(c_1\Theta(c(\Phi))^{p-1}\dcpct_{\alpha,p}(E))
\end{equation}
where $c_1:=2^{p-1}c_0\omega\gamma$ with $\omega:=\dcpct_{\alpha,p}(B(0,1))$. Note that $\dcpct_{\alpha,p}(B(0,r))=\omega r^{n-\alpha p}$, $r>0$
\cite[Proposition 5.1.2]{AH96}. Inequality \eqref{E:intermediate}, together with the fact that $\dcpct_{\alpha,p}\leq \cpct_{\alpha,p}$  \cite[Proposition 5.1.4]{AH96}, and the monotonicity of $\Phi$, yields \eqref{E:mazyaoutput}. 

Given a Radon measure $\nu'$ on $\mathbb{R}^n$, let 
\[\dot{W}_{\alpha,p}\nu'(y):=\int_0^\infty\Big(\frac{\nu'(B(y,\varrho))}{\varrho^{n-\alpha p}}\Big)^{p'-1}\frac{d\varrho}{\varrho},\quad y\in\mathbb{R}^n.\]
Setting 
\begin{equation}\label{E:Calphap}
\dot{C}_{\alpha,p}(K):=\inf\Big\lbrace \int_{\mathbb{R}^n}\dot{W}_{\alpha,p}\nu'd\nu':\ \text{$\nu'$ is a Radon measure and $\dot{W}_{\alpha,p}\nu'\geq 1$ $(\alpha,p)$-q.e. on $K$}\Big\rbrace
\end{equation}
for a given compact set $K\subset \mathbb{R}^n$, we obtain a capacity $\dot{C}_{\alpha,p}$ equivalent to $\dcpct_{\alpha,p}$. This is a consequence of \cite[Theorem 4.5.4]{AH96}. The proof of \eqref{E:intermediate} in \cite[Corollary 1]{Mazya85} is based on an application of \cite[Section 8.5, Theorem]{Mazya85} to the equilibrium measure $\nu_E$ of $E$ with respect to $\dot{C}_{\alpha,p}$ and to a number $m\in (\frac12,1)$. To recall key arguments of the proof of \cite[Section 8.5, Theorem]{Mazya85}, we denote the total mass $\nu_E(\mathbb{R}^n)=\dot{C}_{\alpha,p}(E)$ of $\nu_E$ by $Q$ and let $\varrho_0>0$ be determined by $\varrho_0^{n-\alpha p}=\Theta(c(\Phi))^{p-1}m^{1-p}Q$. Substituting $t=(c_0 \omega\gamma)^{-1}\varrho^{\alpha p-n}$ in \eqref{E:intcondition} with $u=(c_0\omega\gamma)^{-1}\varrho_0^{\alpha p-n}$ gives 
\[\Phi(c_0\omega \gamma\varrho_0^{n-\alpha p})^{1-p'}\int_0^{\varrho_0}\Big(\frac{\Phi(c_0\omega \gamma\varrho^{n-\alpha p})}{\varrho^{n-\alpha p}}\Big)^{p'-1}\frac{d\varrho}{\varrho}\leq \frac{1}{n-\alpha p}c(\Phi)\varrho_0^{\frac{n-\alpha p}{1-p}}.\]
Using this, we can see that
\begin{equation}\label{E:dombym}
Y[\varphi]:=\int_0^\infty \Big(\frac{\varphi(\varrho)}{\varrho^{n-\alpha p}}\Big)^{p'-1}\frac{d\varrho}{\varrho}\leq \Theta(c(\Phi))Q^{p'-1}\varrho_0^{\frac{n-\alpha p}{1-p}}=m,
\end{equation}
where the function $\varphi$ is defined by $\varphi(\varrho):=Q\Phi(c_0\omega\gamma \varrho^{n-\alpha p})/\Phi(c_0\omega\gamma \varrho_0^{n-\alpha p})$ for $0<\varrho\leq \varrho_0$ and $\varphi(\varrho):=Q$ for $\varrho>\varrho_0$. Obviously the set $E$ is contained in the union $G\cup E_0$ of the sets 
\[G:=\{x\in \mathbb{R}^n:\ \dot{W}_{\alpha,p}\nu_E(x)\geq m\}\quad \text{and}\quad E_0:=\{x\in E:\ \dot{W}_{\alpha,p}\nu_E(x)< m\}.\]
By \cite[Section 8.5, Lemma]{Mazya85} the set $\{x\in\mathbb{R}^n:\ \dot{W}_{\alpha,p}\nu_E(x)\geq Y[\varphi]\}$ admits a cover by open balls $B(x_k,r_k)$ such that $\sum_k\varphi(r_k)\leq \frac12 C_n\nu_E(\mathbb{R}^n)$ holds with a constant $C_n>0$ depending only on $n$, and by the inequality in \eqref{E:dombym} this is also a cover for $G$. Since $\nu_E$ is the minimizer for the right-hand side of \eqref{E:Calphap} and, in particular, satisfies the conditions on $\nu'$ there, we have
$\dot{W}_{\alpha,p}\nu_E\geq 1$ $(\alpha,p)$-q.e. on $E$, which implies that $\dot{C}_{\alpha,p}(E_0)=0$, and by the equivalence of the capacities also $\dcpct_{\alpha,p}(E_0)=0$. Using the finiteness of the integral in \eqref{E:intcondition} for $u=1$ and \cite[Proposition 7.2.3/2]{Mazya85}, one can now conclude that for any $\varepsilon>0$ the set $E_0$ admits a cover by open balls $B(y_i,\varrho_i)$ so that $\sum_i \Phi(c_0\gamma\dcpct_{\alpha,p}(B(y_i,\varrho_i))<\varepsilon$ \cite[Section 8.5, Corollary 1]{Mazya85}. Combining these two covers gives a cover for $E$, and with $\varepsilon$ chosen sufficiently small the inequality \eqref{E:intermediate} follows.

Suppose next that $\alpha p=n$. Let $\omega>1$ be such that $\omega^{-1}|\log \varrho|\leq \cpct_{\alpha,p}(B(0,\varrho))\leq \omega|\log \varrho|$ for all $0<\varrho\leq \frac12$, cf. \cite[Proposition 5.1.3]{AH96}. Let $C_{\alpha,p}$ be defined as in \eqref{E:Calphap}, but with 
\[W_{\alpha,p}\nu'(y):=\int_0^\infty\Big(\frac{\nu'(B(y,\varrho))}{\varrho^{n-\alpha p}}\Big)^{p'-1}e^{-\varrho}\frac{d\varrho}{\varrho},\quad y\in\mathbb{R}^n.\]
in place of $\dot{W}_{\alpha,p}$. Then $C_{\alpha,p}$ is a capacity equivalent to $\cpct_{\alpha,p}$ \cite[Theorem 4.5.2]{AH96}. As before, let $m\in (\frac12,1)$. Choose $0<r_1<\min (\frac12,r_0)$ small enough to ensure that $C_{\alpha,p}(E)\leq c(\Phi)^{1-p}m^{p-1}$. Let $Q$ be the total mass $\nu_E(\mathbb{R}^n)=C_{\alpha,p}(E)$ of the equilibrium measure $\nu_E$ of $E$, now with respect to $C_{\alpha,p}$. Choose $0<\varrho_0<\frac{1}{e}$ such that $|\log \varrho_0|^{1-p}=\Theta(c(\Phi))^{p-1}m^{1-p}Q$. Substituting 
$t=(c_0\omega)^{1-p'}|\log \varrho|$ in \eqref{E:intcondition} with $u=(c_0\omega)^{1-p'}|\log \varrho_0|>(c_0\omega)^{1-p'}=:u_0$, we obtain 
\[(\Phi(c_0\omega|\log \varrho_0|^{1-p}))^{1-p'}\int_0^{\varrho_0}(\Phi(c_0\omega|\log \varrho|^{1-p}))^{1-p'}\frac{d\varrho}{\varrho}\leq c(\Phi)|\log\varrho_0|.\]
This now gives 
\[\int_0^\infty(\varphi(\varrho))^{p'-1}e^{-r}\frac{dr}{r}\leq \Theta(c(\Phi))Q^{p'-1}|\log \varrho_0| \leq m,\]
where $\varphi(\varrho):=Q\Phi(c_0\omega|\log\varrho|^{1-p})/\Phi(c_0\omega|\log\varrho_0|^{1-p})$ for $0<\varrho<\varrho_0$ and $\varphi(\varrho):=Q$ for $\varrho>\varrho_0$. Applying \cite[Section 8.5, Lemma and Theorem]{Mazya85} similarly as before and again proceeding as in the proof of \cite[Section 8.5, Corollary 1]{Mazya85}, we see that $E$ admits a ball cover for which an analog of \eqref{E:intermediate} holds. This gives \eqref{E:mazyaoutput} with $c_1=2^{p-1}c_0\omega$.
\end{proof}
}

%
%
%

From Lemma \ref{L:Mazya} we deduce suitable localized estimates.

\begin{corollary}\label{C:isoper}
{Let $1<p<\infty$, let $\mu$ be a  non-zero Radon measure on $\mathbb{R}^n$, $\Gamma=\supp\mu$ and $d:\Gamma\to [0,\infty)$ a Borel function.} 
\begin{enumerate}
\item[(i)] If $0<\alpha<\frac{n}{p}$ and {\eqref{E:fatDbar} and \eqref{E:traceconddx}} hold, then for any $x\in \Gamma$ there are constants $c_2(x)>0$ and $r_2(x)>0$ such that for any Borel set $E\subset \mathbb{R}^n$ and any {$0<\varrho\leq r\leq r_2(x)$} we have 
\begin{equation}\label{E:isoperd}
\mu(B(x,\varrho)\cap E)\leq c_2(x)\cpct_{\alpha,p}(B(x,\varrho)\cap E)^{{ \frac{\underline{q}(x,r)}{p}}},
\end{equation}
where $\underline{q}(x,r):=\frac{p\underline{d}(x,r)}{n-\alpha p}$. If $\alpha=\frac{n}{p}$ and {\eqref{E:fatDbar} and \eqref{E:traceconddx}} hold, then for any $x\in \Gamma$ there are constant $c_2(x)>0$, $r_2(x)>0$ such that for any Borel set $E\subset \mathbb{R}^n$ and any  $0<\varrho\leq r\leq r_2(x)$ we have
\begin{equation}\label{E:isoperexp}
\mu(B(x,\varrho)\cap E)\leq c_2(x)\exp\left(-{c_3}\cpct_{\alpha,p}(B(x,\varrho)\cap E)^{1-p'}\right),
\end{equation}
where {$\underline{d}=\inf_{x\in\Gamma}d(x)$ and $c_3>0$ is a constant independent of $x$, $r$ and $\varrho$.}
\item[(ii)] If $\alpha=\frac{n}{p}$ and  {\eqref{E:fatDbarkappa} and \eqref{E:tracecondhx}} hold, then the conclusion in (i) holds with {$\underline{q}(x,r):=\frac{p\underline{\kappa}(x,r)}{p-1}$ in \eqref{E:isoperd}}.
\end{enumerate}
\end{corollary}

\begin{proof}{ 
To see (i), let $x\in \Gamma$ and $0<r\leq r_2(x):=\frac12\min(r(x),1)$, where $r(x)$ is as in \eqref{E:upregdx}. Consider the finite Borel measure $\nu:=\mu|_{B(x,r)}$. Similarly as in Remark \ref{R:main} (i) we can see that \eqref{E:upregdx} implies 
\begin{equation}\label{E:aboveestimate}
\nu(B(y,\varrho))\leq 2^{d(x)}c(x)\varrho^{\underline{d}(x,r)},\quad y\in \mathbb{R}^n,\ 0<\varrho\leq r. 
\end{equation}

Suppose first that $0<\alpha<\frac{n}{p}$. Then $\cpct_{\alpha,p}(B(y,\varrho))\geq \omega\,\varrho^{n-\alpha p}$ for all $\varrho>0$ and $y\in\mathbb{R}^n$, where $\omega>0$ is as defined after \eqref{E:intermediate}. Together with \eqref{E:aboveestimate} this gives 
\[\nu(B(y,\varrho))\leq 2^{d(x)}c(x)(\omega^{-1}\cpct_{\alpha,p}(B(y,\varrho)))^{\underline{q}(x,r)/p},\quad y\in \mathbb{R}^n,\ 0<\varrho\leq r,\]
which is \eqref{E:mazyainput} with $r_0=r$, $c_0=\omega^{-1}$, {and 
\[\Phi_{x,r}(t):=2^{d(x)}c(x)t^{\underline{q}(x,r)/p}.\] 
Condition \eqref{E:traceconddx} ensures that $p<\underline{q}(x,r)$. Writing $q(x):=\frac{pd(x)}{n-\alpha p}$ and $q_-(x):=q(x,r_2(x))$, we have $q_-(x)\leq \underline{q}(x,r)\leq q(x)$ for all $0<r\leq r_2(x)$. 
The function $\Phi_{x,r}$ is of the type discussed in  Example \ref{Ex:keyex} (i), and reasoning as there, we find that 
$c(\Phi_{x,r})\leq c(\Phi_{x,r_2(x)})=(p-1)/(\frac{q_-(x)}{p}-1)$ 
for all $0<r\leq r_2(x)$. We assume, without loss of generality, that the constant $c_1$ in  \eqref{E:mazyaoutput} is larger than $\Theta(c(\Phi_{x,r_2(x)}))^{1-p}$. Then Lemma \ref{L:Mazya} gives \eqref{E:isoperd} with the constant 
\[c_2(x)=2^{d(x)}c(x)c(n)\big(c_1\Theta(c(\Phi_{x,r_2(x)}))^{p-1}\big)^{\frac{q(x)}{p}},\]
which does not depend on $r$.

Now suppose that $\alpha=\frac{n}{p}$. In this case 
\begin{equation}\label{E:logcap}
\cpct_{\alpha,p}(B(y,\varrho))\geq \omega^{-1}\,(-\log \varrho)^{1-p} 
\end{equation}
for all $0<\varrho\leq r$ and $y\in\mathbb{R}^n$ with $\omega$ as in the proof of Lemma \ref{L:Mazya} for the case $\alpha p=n$. This fact, together with \eqref{E:aboveestimate}, gives
\[\nu(B(y,\varrho))\leq 2^{d(x)}c(x)\exp(-\underline{d}(\omega \cpct_{\alpha,p}(B(y,\varrho)))^{1-p'}),\quad y\in \mathbb{R}^n,\ 0<\varrho\leq r,\]
which is \eqref{E:mazyainput} with $r_0=r$, $c_0=\omega$ and 
\begin{equation}\label{E:Phi}
\Phi_x(t)=2^{d(x)}c(x)\exp(-\underline{d}t^{1-p'}). 
\end{equation}
By \eqref{E:traceconddx}, Example \ref{Ex:keyex} (ii) and Lemma \ref{L:Mazya} and its proof we obtain \eqref{E:isoperexp} with \[c_2(x)=2^{d(x)}c(x)c(n)\quad \text{and}\quad c_3=\underline{d}c_1^{1-p'}/\big(\underline{d}(p'-1)(c_0\omega)^{p-1}+1+\frac{1}{e}\big).\]

To see (ii) we can proceed similarly. Recall estimate \eqref{E:upreghx}. For $\nu$ as defined above, we find that 
\[\nu(B(y,\varrho))\leq c(x)2^{\kappa(x)}(-\log \varrho)^{-\underline{\kappa}(x,r)}, \quad y\in \mathbb{R}^n,\ 0<\varrho\leq r\leq r_2(x),\]
where $r_2(x):=\frac14\min\{r(x),1\}$. Combining with \eqref{E:logcap}, 
\[\nu(B(y,\varrho))\leq c(x)2^{\kappa(x)}(\omega\cpct_{\alpha,p}(B(y,\varrho)))^{\frac{\underline{\kappa}(x,r)}{p-1}},\quad y\in \mathbb{R}^n,\ 0<\varrho\leq r,\]
which is again \eqref{E:mazyainput}, now with $r_0=r$, $c_0=\omega$ and $\Phi_{x,r}(t)=2^{\kappa(x)}c(x)t^{\underline{q}(x,r)/p}$. Again condition \eqref{E:traceconddx} and Lemma \ref{L:Mazya} give \eqref{E:isoperd}, now with 
\[c_2(x)=2^{\kappa(x)}c(x)c(n)\big(c_1\Theta(c(\Phi_{x,r_2(x)}))^{p-1}\big)^{\frac{\underline{q}(x)}{p}},\]
where $\underline{q}(x):=\frac{p\underline{\kappa}(x)}{p-1}$.
}}
%
\end{proof}

\begin{remark} Recall the notion of $(\alpha,p)$-equivalence from Remark \ref{R:counter}. Under the hypotheses of Lemma \ref{L:smooth} any set $E\subset \mathbb{R}^n$ which is  $(\alpha,p)$-equivalent to a Borel set $A\subset \mathbb{R}^n$ is an element of the completion of the Borel $\sigma$-field on $\mathbb{R}^n$ with respect to $\mu$ and we have $\mu(E\Delta A)=0$. By the subadditivity of $\cpct_{\alpha,p}$ and the monotonicity of the functions $\Phi$, the conclusions of Corollary \ref{C:isoper} remain true if we only require that $E\subset \mathbb{R}^n$ is $(\alpha,p)$-equivalent to a Borel set.
\end{remark}

\section{Thin sets, fine continuity and averages}\label{S:proof}

In this section we provide our proofs of Theorems \ref{T:main} and \ref{T:main_logcase}. Our arguments use average limits based on fine continuity (defined below). The existence of these average limits is ensured by assumptions \eqref{E:suffconddx} respectively \eqref{E:suffcondh} and the following lemma. 
Recall that, given $1<p<\infty$ and $0<\alpha\leq \frac{n}{p}$,
a set $E\subset \mathbb{R}^n$ is said to be \emph{$(\alpha,p)$-thin at $x\in\mathbb{R}^n$} if 
\begin{equation}\label{E:thin}
\int_0^1\left(\frac{\cpct_{\alpha,p}(B(x,r)\cap E)}{r^{n-\alpha p}}\right)^{p'-1}\frac{dr}{r}<\infty,
\end{equation}
see \cite[Definition 6.3.7]{AH96}. Original sources for the $(\alpha,p)$-case are \cite{AdamsMeyers, HedbergWolff, Meyers75}. Classical results on thin sets and further context can be found in \cite{Brelot40, Brelot44, PSz45, Wiener} and in particular in \cite[Chapitre IV]{Deny50}.

\begin{lemma}\label{L:lessthanone}
{Let $1<p<\infty$, let $\mu$ be a  non-zero Radon measure on $\mathbb{R}^n$, $\Gamma=\supp\mu$ and $d:\Gamma\to [0,\infty)$ a Borel function.}
Let 
$E\subset \mathbb{R}^n$ be a Borel set that is $(\alpha,p)$-thin at a point $x\in \Gamma$. If
\begin{enumerate}
\item[(i)] $0<\alpha\leq \frac{n}{p}$, {\eqref{E:L}, \eqref{E:fatDbar} and \eqref{E:traceconddx}} hold and $\bar{D}^{d(x)}\mu(x)>0$ or
\item[(ii)] $\alpha=\frac{n}{p}$, {\eqref{E:Lkappa}, \eqref{E:fatDbarkappa} and \eqref{E:tracecondhx}} hold and $\bar{D}^{h_{\kappa(x)}}\mu(x)>0$,
\end{enumerate}
then there is a sequence $(r_k)$ such that $r_k\downarrow 0$ as $k\to \infty$ and  
\begin{equation}\label{E:lessthanone}
\sup_k\frac{\mu(B(x,r_k)\cap E)}{\mu(B(x,r_k))}<1.
\end{equation}
{In particular, we have $\mu(B(x,r)\setminus E)>0$ for all $r>0$.}
\end{lemma}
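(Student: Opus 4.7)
My plan is to combine the density hypothesis, the thinness of $E$ at $x$, and the isoperimetric inequalities of Corollary~\ref{C:isoper} to produce scales on which $E$ occupies only a small $\mu$-fraction of the ball at $x$. Write $h(r)=r^{d(x)}$ in case (i) and $h(r)=h_{\kappa(x)}(r)$ in case (ii). Positivity of the upper density $\bar D^h\mu(x)$ gives a sequence $\rho_k\downarrow 0$ with $\mu(B(x,\rho_k))\geq c_0\,h(\rho_k)$; after passing to a subsequence I can assume the test intervals $I_k\subset(0,\rho_0)$ chosen below are pairwise disjoint. On each $I_k$ I will exploit the thinness condition \eqref{E:thin}: the localized integrals $\int_{I_k}\bigl(\cpct_{\alpha,p}(B(x,r)\cap E)/r^{n-\alpha p}\bigr)^{p'-1}dr/r$ form a convergent series (bounded by the integral over $(0,1)$), hence tend to $0$, and a Chebyshev-type averaging over $dr/r$ produces $r_k^*\in I_k$ at which the integrand is small. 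Corollary~\ref{C:isoper} then turns this capacity smallness into an upper bound on $\mu(B(x,r_k^*)\cap E)$, which I compare with the lower bound $\mu(B(x,r_k^*))\geq\mu(B(x,\rho_k))\geq c_0\,h(\rho_k)$ coming from monotonicity and density.

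In case (i) with $\alpha p<n$ I take $I_k=[\rho_k,2\rho_k]$, which has fixed $dr/r$-length $\log 2$. Averaging yields $r_k^*\in I_k$ with $\phi(r_k^*):=\cpct_{\alpha,p}(B(x,r_k^*)\cap E)/(r_k^*)^{n-\alpha p}\to 0$; the polynomial form of Corollary~\ref{C:isoper}(i) then gives $\mu(B(x,r_k^*)\cap E)\leq c_2(x)\phi(r_k^*)^{q(x)/p}(r_k^*)^{d(x)}$, while $r_k^*/\rho_k\in[1,2]$ yields $\mu(B(x,r_k^*))\geq c_0\,2^{-d(x)}(r_k^*)^{d(x)}$. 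The ratio is therefore bounded by a constant times $\phi(r_k^*)^{q(x)/p}\to 0$, and restricting to a tail of $k$ gives the desired $\sup_k<1$.

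The main obstacle is the logarithmic cases $\alpha p=n$ in (i) and case (ii), where the isoperimetric bounds carry no $(r_k^*)^{d(x)}$ prefactor, so that a logarithmic factor from $|\log r_k^*|$ must be defeated; one needs $\cpct_{\alpha,p}(B(x,r_k^*)\cap E)^{p'-1}\cdot|\log r_k^*|\to 0$, not merely $\cpct^{p'-1}\to 0$, and short-interval averaging is insufficient. My resolution is to take the longer interval $I_k=[\rho_k,\sqrt{\rho_k}]$, whose image under $t=-\log r$ is $[t_k/2,t_k]$ with $t_k:=|\log\rho_k|\to\infty$. Then $\epsilon_k:=\int_{t_k/2}^{t_k}\cpct_{\alpha,p}^{p'-1}(B(x,e^{-t})\cap E)\,dt\to 0$ as a tail of a convergent integral, while the $dt$-length of the interval is $t_k/2$, so averaging produces $t_k^*\in[t_k/2,t_k]$ satisfying $t_k\cdot\cpct_{\alpha,p}^{p'-1}(B(x,e^{-t_k^*})\cap E)\leq 2\epsilon_k$; since $|\log r_k^*|=t_k^*\leq t_k$, this gives $\cpct_{\alpha,p}^{p'-1}(B(x,r_k^*)\cap E)\cdot|\log r_k^*|\leq 2\epsilon_k\to 0$. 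Substituting into the exponential form of Corollary~\ref{C:isoper}(i) (respectively the polynomial form of Corollary~\ref{C:isoper}(ii), together with $h_{\kappa(x)}(r_k^*)\leq 2^{\kappa(x)}h_{\kappa(x)}(\rho_k)$ from the Hausdorff-function property) and dividing by $\mu(B(x,\rho_k))\geq c_0\,h(\rho_k)$ leaves a ratio of order $\rho_k^{d(x)(c_1^{1-p'}/(2\epsilon_k)-1)}$ respectively $(\text{const}\cdot\epsilon_k)^{\kappa(x)}$, both of which tend to $0$. Passing to a further subsequence makes $r_k$ strictly decreasing.
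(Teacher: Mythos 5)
Your proposal is correct and reaches the same conclusion as the paper, but the route to the key capacity decay is packaged differently. The paper exploits the fact that $r\mapsto\cpct_{\alpha,p}(B(x,r)\cap E)$ is nondecreasing: after the substitution $t=r^{(\alpha p-n)/(p-1)}$ (resp.\ $t=-\log r$ when $\alpha p=n$) the integrand of \eqref{E:thin} becomes a \emph{decreasing} integrable function $\varphi$ of $t$, and the elementary fact that such $\varphi$ must satisfy $t\varphi(t)\to 0$ (cf.\ \cite[formula (6.3.2)]{AH96}, \cite[p.~170]{Deny50}) yields the Wiener--type limits
$\lim_{r\to0}r^{\alpha p-n}\cpct_{\alpha,p}(B(x,r)\cap E)=0$, respectively $\lim_{r\to0}(-\log r)^{p-1}\cpct_{\alpha,p}(B(x,r)\cap E)=0$,
along \emph{all} radii $r\downarrow 0$. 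This is what the paper then combines with Corollary~\ref{C:isoper} and the positivity of the upper density. You instead avoid the monotonicity observation and recover the needed decay only along the chosen scales $r_k^*$ via a Chebyshev averaging over disjoint intervals adapted to the density sequence, with the crucial insight that in the endpoint case $\alpha p=n$ one must lengthen the averaging interval to $[\rho_k,\sqrt{\rho_k}]$ (length $\sim t_k$ in $dt$) to beat the logarithmic factor. Your version is a bit more labour-intensive and requires organising two interlocking sequences and a disjointness reduction, but it is a valid alternative that does not lean on the monotonicity of capacity. Both arguments then feed into the same isoperimetric/density comparison and both actually prove the stronger statement that the ratio in \eqref{E:lessthanone} tends to zero along the sequence. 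One inessential remark: the auxiliary inequality $h_{\kappa(x)}(r_k^*)\leq 2^{\kappa(x)}h_{\kappa(x)}(\rho_k)$ you invoke in case (ii) is true but not actually needed in the estimate; the lower bound $\mu(B(x,r_k^*))\geq\mu(B(x,\rho_k))\geq c_0 h_{\kappa(x)}(\rho_k)$ already suffices once the capacity bound is expressed in terms of $t_k=-\log\rho_k$.
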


\begin{proof}
Suppose first that $0<\alpha<\frac{n}{p}$. Substituting $t=r^{\frac{\alpha p-n}{p-1}}$, we see that 
\eqref{E:thin} is equivalent to the integrability of the decreasing function $\varphi(t):=(\cpct_{\alpha,p}(B(x,t^{\frac{p-1}{\alpha p-n}})\cap E)^{p'-1}$ over $(1,\infty)$, which forces $\lim_{t\to\infty}t\varphi(t)=0$, cf. \cite[p. 170]{Deny50}. {Together with \eqref{E:isoperd}, this gives 
\[\lim_{r\to 0}\frac{\mu(B(x,r)\cap E)^{p/\underline{q}(x,r)}}{r^{n-\alpha p}}\leq c_4(x)\:\lim_{r\to 0}\frac{\cpct(B(x,r)\cap E)}{r^{n-\alpha p}}=0\]
with a suitable constant $c_4(x)>0$. Using \eqref{E:L} and arguing as in Remark \ref{R:basicimplication} (i), we arrive at
\[\limsup_{r\to 0} \frac{\mu(B(x,r)\cap E)}{r^{d(x)}}\leq e^{L_d(x)}\lim_{r\to 0}\frac{\mu(B(x,r)\cap E)}{r^{\underline{d}(x,r)}}=0.\]
Since the upper density is positive, there is a sequence $(r_k)$ with $r_k\downarrow 0$ as $k\to \infty$ and some $\varepsilon>0$ such that $\mu(B(x,r_k))>\varepsilon\: r_k^{d(x)}$ for all $k$. Combining, we obtain \eqref{E:lessthanone}. 

Now suppose that $\alpha=\frac{n}{p}$. By \eqref{E:thin} and the substitution $t=-\log r$ we have 
\begin{equation}\label{E:Wienerlog}
\lim_{r\to 0}\cpct_{\alpha,p}(B(x,r)\cap E)(-\log r)^{p-1}=0.
\end{equation}
Clearly $\lim_{s\to 0}\Phi(s)=0$ holds for the function $\Phi$ in \eqref{E:Phi}, so that \eqref{E:isoperexp} and \eqref{E:Wienerlog} give
\[\limsup_{r\to 0}\Big(\frac{1}{c_2(x)}\mu(B(x,r)\cap E)\Big)^{\frac{1}{-\log r}}\leq \lim_{r\to 0}\exp\Big(-c_3\cpct_{\alpha,p}(B(x,r)\cap E)^{1-p'}(-\log r)^{-1}\Big)=0\]
Taking the logarithm and observing that $c_2(x)$ may be omitted, we find that 
\[\lim_{r\to 0}(-\log r)\big(-\log \mu(B(x,r)\cap E)\big)^{-1}=0.\]
The positivity of the upper density gives
\[(-2 d(x)\log r_k)^{-1}\leq \big(-\log\mu(B(x,r_k))\big)^{-1}\]
along a sequence $(r_k)$ as before. Combining, we see that 
\[\big(-\log\mu(B(x,r_k)\cap E)\big)^{-1}< (-\log \mu(B(x,r_k))^{-1}\]
for all sufficiently large $k$. Taking exponentials on both sides gives \eqref{E:lessthanone}. 

To prove (ii) one can proceed similarly, combining \eqref{E:isoperd} with {$\underline{q}(x,r)$ as in Corollary \ref{C:isoper} (ii) and \eqref{E:Wienerlog} to arrive at 
\[\limsup_{r\to 0}\mu(B(x,r)\cap E)(-\log r)^{\kappa(x)}\leq e^{\ell_\kappa(x)}\lim_{r\to 0}\mu(B(x,r)\cap E)(-\log r)^{\underline{\kappa}(x,r)}=0.\]
By the positivity of the upper density we can again find a sequence  $(r_k)$ as before and some $\varepsilon>0$ such that 
$\mu(B(x,r_k))>\varepsilon(-\log r_k)^{-\kappa(x)}$ for all $k$. Together this gives \eqref{E:lessthanone}. }
}
\end{proof}

\begin{remark}
Lemma \ref{L:lessthanone} actually holds for $E\subset \mathbb{R}^n$ $(\alpha,p)$-equivalent to a Borel set that is $(\alpha,p)$-thin at $x$.
\end{remark}

Recall that a function $g:\mathbb{R}^n\to\mathbb{R}$ is said to be \emph{$(\alpha,p)$-finely continuous at $x\in\mathbb{R}^n$} if for any $\varepsilon>0$ the set $E_x^\varepsilon(g):=\{y\in\mathbb{R}^n:\ |g(x)-g(y)|\geq \varepsilon\}$ is $(\alpha,p)$-thin at $x$. See \cite[Definition 6.4.2]{AH96}, plus \cite{Hedberg72, HedbergWolff, Meyers75} and \cite{Brelot71, Fuglede65, Fuglede71} for further background. It is well known that if $g$ is $(\alpha,p)$-finely continuous at $x\in\mathbb{R}^n$, then there is a set $E_x\subset\mathbb{R}^n$, $(\alpha,p)$-thin at $x$ and such that $\lim_{y\to x,\ y\in  \mathbb{R}^n\setminus E_x}g(y)=g(x)$, 
see \cite[Proposition 6.4.3]{AH96}. The proof of this result shows that one can take $E_x$ to be the union of intersections $E_x^{\varepsilon_k}(g)\cap B(x,r_k)$, where $(\varepsilon_k)$ and $(r_k)$ are sequences with $\varepsilon_k\downarrow 0$ and $r_k\downarrow 0$ as $k\to \infty$, see  \cite[p. 177]{AH96}. {In particular, we can take $E_x$ to be Borel, provided that $g$ is Borel.} It is also {well known} that an $(\alpha,p)$-quasi continuous function is $(\alpha,p)$-finely continuous at $(\alpha,p)$-q.e.\ $x\in\mathbb{R}^n$, see \cite[Theorem 6.4.5]{AH96}, \cite{Hedberg72, HedbergWolff, Meyers75} and \cite{Deny50, Fuglede65}.

We can now provide our promised proofs of Theorems \ref{T:main} and \ref{T:main_logcase}. 

\begin{proof}[Proof of Theorems \ref{T:main} and \ref{T:main_logcase}]
By Lemma \ref{L:smooth} it suffices to prove that the vanishing of $\widetilde{u}$ {$\mu$-a.e.} on $\Gamma$ implies its vanishing $(\alpha,p)$-q.e.\ on $\Gamma$.

{As mentioned in Section \ref{S:results}, we can find an $(\alpha,p)$-quasi continuous representative $\widetilde{u}$ which is a {real-valued} Borel function on $\mathbb{R}^n$.} This follows from \cite[Propositions 2.3.7 and 6.1.3]{AH96}. By the preceding we can find a set $N_0\subset \mathbb{R}^n$ of zero $(\alpha,p)$-capacity such that $\widetilde{u}$ is $(\alpha,p)$-finely continuous at all $x\in \mathbb{R}^n\setminus N_0$. For any such $x$, there is a Borel set $E_x\subset\mathbb{R}^n$, $(\alpha,p)$-thin at $x$, such that 
\begin{equation}\label{E:finelim}
\lim_{y\to x,\ y\in  \mathbb{R}^n\setminus E_x}\widetilde{u}(y)=\widetilde{u}(x).
\end{equation}

By \eqref{E:suffconddx} respectively \eqref{E:suffcondh} we can find a set $N_1\subset \mathbb{R}^n$ with $\cpct_{\alpha,p}(N_1)=0$ such that at all $x\in \Gamma\setminus N_1$ we have  $\bar{D}^{d(x)}\mu(x)>0$ respectively $\bar{D}^{h_{\kappa(x)}}\mu(x)>0$.

The set $N:=N_0\cup N_1$ has zero $(\alpha,p)$-capacity. Now let $x\in \Gamma\setminus N$. {Using Lemma \ref{L:lessthanone} and \eqref{E:finelim} we obtain 
\[\widetilde{u}(x)=\lim_{r\to 0} \frac{1}{\mu(B(x,r)\setminus E_x)}\int_{B(x,r)\setminus E_x} \widetilde{u}(y)\:d\mu(y).\]
}
If now $\widetilde{u}=0$ $\mu$-a.e.\ on $\Gamma=\supp\mu$, then this gives $\widetilde{u}(x)=0$. It follows that $\widetilde{u}=0$ on $\Gamma\setminus N$, and this confirms the desired implication.
\end{proof}

\end{document}